\documentclass[11pt]{amsart}
\usepackage[english]{babel}
\usepackage{amssymb,amsmath,amsthm}
\usepackage[latin2]{inputenc}
\usepackage{amsfonts, verbatim, amscd}
\usepackage[foot]{amsaddr}

\reversemarginpar
\topmargin -10mm


\newtheorem{theorem}{Theorem}
\newtheorem{proposition}[theorem]{Proposition}
\newtheorem{lemma}[theorem]{Lemma}

\def\acknowledgment{\par\addvspace{17pt}\small\rmfamily
\trivlist\if!\ackname!\item[]\else
\item[\hskip\labelsep
{\bfseries\ackname}]\fi}



\def\C{\mathbb{C}}
\def\R{\mathbb{R}}


\newcommand{\du}{\mathrm{d}}

\begin{document}
\title{The non-linear Cousin problem for $J$-holomorphic maps}

\author{Uro\v s Kuzman}

\address[Uro\v s Kuzman]{Faculty of Mathematics and Physics, University of Ljubljana, 
and Institute of Mathematics, Physics and Mechanics, Jadranska 19, 
1000 Ljubljana, Slovenia, uros.kuzman@fmf.uni-lj.si}

%

\begin{abstract}
We solve a non-linear Cousin problem for $J$-holomorphic maps. That is, we provide a gluing method for the pseudoholomorphic maps defined on a Cartan pair of domains in $\C$.  
\end{abstract}
\maketitle

\section{Introduction}

Let $\Omega_1, \Omega_2\subset \C$ be bounded simply connected domains with $\mathcal{C}^{1}$-boundary. We say that $(\Omega_1,\Omega_2)$ is a \emph{good pair} if $\Omega_1\cap \Omega_2$ is simply connected with $\mathcal{C}^{1}$-boundary and  $$\overline{\Omega_1\setminus\Omega_2}\cap \overline{\Omega_2\setminus\Omega_1}=\emptyset.$$ 
That is, the sets $\Omega_1$ and $\Omega_2$ are 'well-glued' together in the sense that there exists a smooth cut-off function $\chi\colon \C\to[0,1]$ that equals $1$ on some neighborhood of $\overline{\Omega_1\setminus\Omega_2}$ and vanishes on some neighborhood of $\overline{\Omega_2\setminus\Omega_1}$. 

On such a pair of domains, the classical additive Cousin problem for holomorphic functions can be solved with estimates. In particular, for a bounded domain $\Omega\subset \C$ let $T_\Omega$ be the standard Cauchy-Green operator:
\begin{eqnarray}\label{CG}T_\Omega(f)(\zeta)=\frac{1}{\pi}\iint_{\Omega}\frac{f(z)}{\zeta-z}\, \du x \, \du y. \end{eqnarray}
Then given holomorphic maps $f_1\colon\Omega_1\to\C^n$ and $f_2\colon\Omega_2\to\C^n$ whose images are $\mathcal{C}^{0}$-close on the intersection $\Omega_1\cap\Omega_2$ the map   
\begin{eqnarray}\label{osnovni}\widehat{f}=\chi f_1+ (1-\chi)f_2 - T_{\Omega_1\cup\Omega_2}\left[\chi_{\bar{\zeta}}(f_1-f_2)\right]\end{eqnarray}  
is holomorphic on $\Omega_1\cup\Omega_2$ and $\mathcal{C}^{0}$-close to $f_j$ on $\Omega_j$, $j=1,2$. Indeed, the first property follows from the fact that $\left[  T_{\Omega_1\cup\Omega_2} f\right]_{\bar{\zeta}}=f$ on $\Omega_1\cup\Omega_2$ while the second is implied by the bounded operator norm of $T_\Omega\colon \mathcal{C}(\Omega)\to \mathcal{C}(\Omega)$. Therefore, we can say that we have 'glued' $f_1$ and $f_2$ into a holomorphic map $\widehat{f}$. In this paper we provide analogous constructions for maps that are holomorphic with respect to a non-integrable almost complex structure.

Let $J$ be a $2n\times 2n$ smooth matrix function defined on $\R^{2n}$ and satisfying $J^2 = -id$. A map $f\colon \Omega\to \R^{2n}$ of Sobolev class $f\in W^{1,p}(\Omega)$, $p>2$, is called $J$-holomorphic if its differential satisfies the equation $$df \circ J_{st} = J(f) \circ df.$$ Here $J_{st}$ corresponds to the multiplication by the imaginary unit in $\R^{2n}$. When $\det(J+J_{st})\neq 0$ this condition can be turned into an equivalent non-linear Cauchy-Riemann system
\begin{eqnarray}\label{A}\bar{\partial}_Jf= f_{\bar{\zeta}}+A(f)\overline{f_{\zeta}}=0,\end{eqnarray}
where $A(z)(v)=(J(z)+J_{st})^{-1}(J(z)-J_{st})(\bar{v})$ is a $n\times n$ complex matrix function \cite{ST3}. Therefore, we denote by $\mathcal{J}(U)$ the set of smooth structures for which $\det(J+J_{st})\neq 0$ on $U\subset\mathbb{R}^{2n}$ and by $\mathcal{J}=\mathcal{J}(\mathbb{R}^{2n})$. Further, let $\mathcal{O}_J(\Omega)\subset W^{1,p}(\Omega)$ be the set of $J$-holomorphic maps. The following theorem provides a solution of the local Cousin problem.

\begin{theorem}\label{ena}
Let $(\Omega_1,\Omega_2)$ be a good pair and let $J\in\mathcal{J}$. Let $\mathcal{W}_1\subset \mathcal{O}_J(\Omega_1)$ and $\mathcal{W}_2\subset \mathcal{O}_J(\Omega_2)$ be two families of maps that are precompact in the spaces $W^{1,p}(\Omega_1)$ and $W^{1,p}(\Omega_2)$, respectively. For every $\epsilon>0$ there is $\delta>0$ such that for every $f_1\in \mathcal{W}_1$ and $f_2\in \mathcal{W}_2$ satisfying $\left\|f_1-f_2\right\|_{W^{1,p}(\Omega_1\cap\Omega_2)}<\delta$ there exists a map $\widehat{f}\in \mathcal{O}_J(\Omega_1\cup\Omega_2)$ such that: 
$$\left\|\widehat{f}-f_1\right\|_{W^{1,p}(\Omega_1)}<\epsilon\;\;\textrm{ and }\;\;\left\|\widehat{f}-f_2\right\|_{W^{1,p}(\Omega_2)}<\epsilon.$$ 
\end{theorem}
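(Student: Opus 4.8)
The plan is to reduce the non-linear problem to the linear one by an iteration (Newton–Kantorovich-style) scheme, using the classical formula \eqref{osnovni} as the linear solving operator at each step. The starting point is to linearize \eqref{A}: if $\widehat f$ is a small perturbation of a $J$-holomorphic map, then $\bar\partial_J\widehat f$ is controlled by the linearized Cauchy–Riemann operator $g\mapsto g_{\bar\zeta}+A(f)\overline{g_\zeta}+(\text{lower order terms involving }dA)$, and the gluing formula \eqref{osnovni} is precisely designed so that its ``defect'' $\bar\partial_J\widehat f$ is not zero but is \emph{quadratically small} in the mismatch $\|f_1-f_2\|$. Concretely, I would first set $\widehat f_0=\chi f_1+(1-\chi)f_2-T_{\Omega_1\cup\Omega_2}[\chi_{\bar\zeta}(f_1-f_2)]$ on $\Omega:=\Omega_1\cup\Omega_2$, observe that $\widehat f_0$ is $W^{1,p}$-close to $f_j$ on $\Omega_j$ (the bounded operator norm of $T_\Omega$ on appropriate spaces, together with the fact that $\chi_{\bar\zeta}$ is supported in $\overline{\Omega_1\cap\Omega_2}$), and then compute $h_0:=\bar\partial_J\widehat f_0$. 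Since $\bar\partial_J f_1=\bar\partial_J f_2=0$ and $\bar\partial_J$ applied to $T_\Omega[\chi_{\bar\zeta}(f_1-f_2)]$ cancels the linear part of the $\chi$-patching, what survives in $h_0$ is a term of the form $[A(\widehat f_0)-A(f_j)]\overline{(\cdot)_\zeta}$ plus contributions from the $T_\Omega$-term fed through the nonlinearity — all of which are $O(\|f_1-f_2\|^2)$ by the mean-value inequality for the smooth matrix function $A$ and the precompactness of $\mathcal W_1,\mathcal W_2$ (which gives uniform $W^{1,p}$, hence by Sobolev uniform $\mathcal C^0$, bounds and equicontinuity, so $A$ is evaluated on a fixed compact set of $\R^{2n}$ where it is Lipschitz).

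Next I would correct the defect by a fixed-point argument. I look for $\widehat f=\widehat f_0+T_\Omega(u)$ with $u\in L^p(\Omega)$ small; then $\bar\partial_J\widehat f=0$ becomes $u+A(\widehat f_0+T_\Omega u)\,\overline{\partial_\zeta(\widehat f_0+T_\Omega u)}=0$, i.e.
\begin{eqnarray}\label{fixedpt}
u=-A(\widehat f_0+T_\Omega u)\,\overline{(\widehat f_0)_\zeta+(T_\Omega u)_\zeta}.
\end{eqnarray}
The right-hand side defines a map $\Phi$ on a small ball in $L^p(\Omega)$; here I use that $T_\Omega\colon L^p\to W^{1,p}$ is bounded, that the Beurling–Ahlfors transform $u\mapsto\partial_\zeta T_\Omega u$ is bounded on $L^p$ for $p>2$, and that $\|A\|_{\mathcal C^0}$ on the relevant compact set can be assumed small after shrinking (or, if $J$ is not close to $J_{st}$, one instead works in a coordinate chart adapted to $J$, or iterates on small subdomains — a standard device in the $J$-holomorphic literature) so that $\Phi$ is a contraction. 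Since $\Phi(0)=-A(\widehat f_0)\overline{(\widehat f_0)_\zeta}=h_0$ has $L^p$-norm $O(\delta^2)$, the unique fixed point $u^\ast$ has $\|u^\ast\|_{L^p}=O(\delta^2)$ as well, hence $\|T_\Omega u^\ast\|_{W^{1,p}(\Omega)}=O(\delta^2)$, and $\widehat f=\widehat f_0+T_\Omega u^\ast$ is genuinely $J$-holomorphic on $\Omega$ by \eqref{A} and elliptic regularity. Combining the two estimates, $\|\widehat f-f_j\|_{W^{1,p}(\Omega_j)}\le\|\widehat f_0-f_j\|_{W^{1,p}(\Omega_j)}+\|T_\Omega u^\ast\|_{W^{1,p}(\Omega)}=O(\delta)+O(\delta^2)<\epsilon$ once $\delta$ is chosen small, which finishes the proof.

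The main obstacle I anticipate is \emph{not} the fixed-point step itself but obtaining the crucial quadratic estimate $\|h_0\|_{L^p(\Omega)}=O(\|f_1-f_2\|_{W^{1,p}(\Omega_1\cap\Omega_2)}^2)$ uniformly over the precompact families. One has to carefully expand $\bar\partial_J\widehat f_0$: the patching $\chi f_1+(1-\chi)f_2$ contributes $\chi_{\bar\zeta}(f_1-f_2)$ to $(\widehat f_0)_{\bar\zeta}$, which is exactly killed by the $T_\Omega$-term's $\bar\zeta$-derivative, but one must check that the \emph{nonlinear} term $A(\widehat f_0)\overline{(\widehat f_0)_\zeta}$ differs from $\chi A(f_1)\overline{(f_1)_\zeta}+(1-\chi)A(f_2)\overline{(f_2)_\zeta}=0$ (which vanishes since each $f_j$ is $J$-holomorphic) only by terms each carrying a factor $f_1-f_2$ or a factor $T_\Omega[\chi_{\bar\zeta}(f_1-f_2)]$ and its derivative — both $O(\delta)$ — multiplied against another such factor; this is where the Lipschitz bound on $A$ over the fixed compact set enters, and where precompactness is essential to make the constants uniform in $f_1,f_2$. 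A secondary technical point is the smallness of $\|A\|$ needed for the contraction: if one cannot reduce to that case by a global change of coordinates, the cleanest fix is to first prove the theorem when $\mathrm{diam}(\Omega_1\cup\Omega_2)$ is small (so that $\|T_\Omega\|$ on $\Omega$ is small, compensating a possibly large $\|A\|$), and then remove that restriction by a standard covering/finite-iteration argument over a chain of good pairs — but I expect the paper handles this via the hypothesis $J\in\mathcal J$ and a rescaling, so I would present the contraction step under a working assumption that $\|A\|_{\mathcal C^0}\cdot\|T_\Omega\|$ is small and indicate the reduction.
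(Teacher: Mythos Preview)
There are two issues. The minor one: your claimed quadratic estimate $\|h_0\|_{L^p}=O(\delta^2)$ is wrong. In the term $\chi\,[A(\widehat f_0)-A(f_1)]\,\overline{(f_1)_\zeta}$ the first factor is $O(\delta)$ in $L^\infty$ but the second is $O(1)$ in $L^p$ (uniformly over the precompact family), so the product is only $O(\delta)$; there is no second small factor. This is not fatal --- an $O(\delta)$ defect already suffices --- and indeed the paper uses the simpler pregluing $\varphi=\chi f_1+(1-\chi)f_2$ (no $T_\Omega$ correction) and only proves $\|\bar\partial_J\varphi\|_{L^p}\le C_0\delta$ in Lemma~\ref{sedem}.

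The serious gap is the contraction step, and the fixes you propose do not work. Writing $\widehat f=\widehat f_0+T_\Omega u$ and iterating, the Lipschitz constant of your map $\Phi$ is essentially $\|A\|_{L^\infty}\cdot\|\Pi\|_{L^p\to L^p}$ with $\Pi=\partial_\zeta T_\Omega$ the Beurling transform. For $p>2$ one has $\|\Pi\|_{L^p\to L^p}>1$, and this norm is \emph{scale-invariant}: shrinking $\mathrm{diam}(\Omega)$ does not shrink it, so the small-diameter reduction fails. A coordinate change making $J=J_{st}$ at one point makes $A$ small only near that point, not on all of $\Omega_1\cup\Omega_2$; and the hypothesis $J\in\mathcal J$ by itself gives no bound of the form $\|A\|_\infty\|\Pi\|_{L^p}<1$. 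The paper circumvents this entirely by replacing $T_\Omega$ with a bounded right inverse $Q_\varphi$ of the \emph{full} linearized operator $d_\varphi\mathcal F(V)=V_{\bar\zeta}+A(\varphi)\overline{V_\zeta}+B_1^\varphi V+B_2^\varphi\overline V$; its existence for each fixed $\varphi$ is imported from \cite{BERKUZ}, and the key new ingredient is Proposition~\ref{pet}, which shows that over a precompact family of $\varphi$'s these right inverses can be chosen with \emph{uniformly} bounded operator norm. With that uniform bound, the abstract implicit function theorem (Theorem~\ref{tri}, packaged as Theorem~\ref{sest}) applies directly and yields the $J$-holomorphic approximation with no smallness assumption on $A$.
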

\noindent A particular case of this theorem was proved in \cite[Proposition 6]{BERKUZ}. However, the present version is stronger since we remove the tedious assumption on the regularity of the structure $J$. Note that, in comparison to the classical problem (\ref{osnovni}), the derivatives of $f_1$ and $f_2$ have to obey certain $L^p$-bounds. This is due to the fact that $\bar\partial_Jf$ is non-linear and includes $f_\zeta$ as well. Hence the $L^{p}$-norm of $df$ has to be bounded if one wants to apply the implicit fuction theorem (see Theorem \ref{sest}). Moreover, we need precompactness of $\mathcal{W}_1$ and $\mathcal{W}_2$ in order to produce an uniformly bounded right inverse for the linearization of $\bar{\partial}_J$. Finally, note that $f_1$ and $f_2$ have to be $W^{1,p}$-close (and not just $\mathcal{C}^0$) on $\Omega_1\cap\Omega_2$. Nevertheless, since $p>2$, the Sobolev embedding $W^{1,p}(\Omega_j)\hookrightarrow \mathcal{C}(\overline{\Omega}_j)$ assures that $\widehat{f}$ is again $\mathcal{C}^0$-close to $f_j$ on $\Omega_j$.  

As pointed out, the above construction is 'local'. That is, in order to apply it on a manifold, $f_1(\Omega_1)$ and $f_2(\Omega_2)$ have to lie in the same chart. Therefore, it is natural to seek its analogue valid in the case when these images are contained in two overlapping chart neighborhoods. In this way, we obtain a so-called non-linear version of the Cousin problem which, in the complex case, was solved by Rosay (see e.g. \cite[Lemma 4.5]{CHAK} or \cite[Proposition 1']{Rosay}). We provide an analogue of his results in the case when $f_1$ is fixed.

\begin{theorem}\label{dva} Let $(\Omega_1,\Omega_2)$ be a good pair and let the sets $U_1,U_2\subset \R^{2n}$ be open. Given a diffeomorphism $\Psi\colon U_2\to U_1$ let $J_1\in\mathcal{J}(U_1)$ and $J_2\in\mathcal{J}(U_2)$ be such that $J_2=d\Psi_*(J_1)$. Let $f_1\in\mathcal{O}_{J_1}(\Omega_1)$ be an embedding satisfying $f_1(\overline{\Omega_1\cap\Omega_2})\subset U_1$ and let  $\mathcal{W}_2\subset \mathcal{O}_{J_2}(\Omega_2)$ be a $W^{1,p}(\Omega_2)$-precompact family of maps satisfying $f_2(\overline{\Omega_1\cap\Omega_2})\subset U_2$. For every $\epsilon>0$ there is $\delta>0$ such that for every $f_2\in\mathcal{W}_2$ satisfying $\left\|f_1-\Psi(f_2)\right\|_{W^{1,p}(\Omega_1\cap\Omega_2)}<\delta$ there exist maps $\widehat{f}_1\in \mathcal{O}_{J_1}(\Omega_1)$ and $\widehat{f}_2\in \mathcal{O}_{J_2}(\Omega_2)$ satisfying: $\Psi(\widehat{f}_2)=\widehat{f}_1$ on $\Omega_1\cap\Omega_2$,  
$$\left\|\widehat{f}_1-f_1\right\|_{W^{1,p}(\Omega_1)}<\epsilon\;\;\textrm{and}\;\; \left\|\widehat{f}_2-f_2\right\|_{W^{1,p}(\Omega_2)}<\epsilon.$$
\end{theorem}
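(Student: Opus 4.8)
The plan is to reduce Theorem~\ref{dva} to the quantitative implicit function theorem (Theorem~\ref{sest}), applied to a coupled system in which the unknown is the pair of perturbations and the equations are the two almost‑complex Cauchy--Riemann equations \eqref{A} together with the matching identity on the overlap. Set $O:=\Omega_1\cap\Omega_2$. Since $p>2$, the embedding $W^{1,p}(\Omega_j)\hookrightarrow\mathcal C^0(\overline{\Omega}_j)$ and the precompactness of $\mathcal W_2$ furnish an $r>0$ and neighbourhoods of $f_1(\overline O)$ in $U_1$ and of $f_2(\overline O)$ in $U_2$ such that $f_1+u_1$ maps $\overline O$ into $U_1$ whenever $\|u_1\|_{W^{1,p}(\Omega_1)}<r$, and $f_2+u_2$ maps $\overline O$ into $U_2$ whenever $\|u_2\|_{W^{1,p}(\Omega_2)}<r$ and $f_2\in\mathcal W_2$. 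Hence
$$\Phi(u_1,u_2):=\Bigl(\,\bar\partial_{J_1}(f_1+u_1),\ \bar\partial_{J_2}(f_2+u_2),\ \bigl[(f_1+u_1)-\Psi(f_2+u_2)\bigr]\big|_O\,\Bigr)$$
is defined and of class $\mathcal C^1$ on the ball of radius $r$ in $W^{1,p}(\Omega_1)\times W^{1,p}(\Omega_2)$, with all norms bounded uniformly over $f_2\in\mathcal W_2$ (using that $\Psi,d\Psi$ are smooth and that $W^{1,p}$ is a Banach algebra continuously embedded in $\mathcal C^0$), and $\Phi(0,0)=(0,0,f_1-\Psi(f_2))$ has third component of norm $<\delta$ while the first two vanish. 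A zero of $\Phi$ within distance $\lesssim\delta$ of $(0,0)$ is precisely a pair $\widehat f_j\in\mathcal O_{J_j}(\Omega_j)$ with $\Psi(\widehat f_2)=\widehat f_1$ on $O$ and $\|\widehat f_j-f_j\|_{W^{1,p}(\Omega_j)}<\epsilon$ once $\delta$ is small; so the whole problem is to produce a right inverse of $D\Phi(0,0)$, bounded uniformly over $\mathcal W_2$.

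The differential is $D\Phi(0,0)(v_1,v_2)=\bigl(L_1v_1,\ L_2v_2,\ (v_1-d\Psi(f_2)v_2)|_O\bigr)$, where $L_j:=D\bar\partial_{J_j}(f_j)\colon W^{1,p}(\Omega_j)\to L^p(\Omega_j)$ is the linearization appearing in the proof of Theorem~\ref{ena}. I would first solve $L_jv_j^0=a_j$ with the bounded right inverses $R_1,R_2$ of $L_1,L_2$ — the former available since $f_1$ is a single fixed embedding, the latter with a uniform bound over $\mathcal W_2$ by precompactness, exactly the inverses used for Theorem~\ref{ena} — and then reduce to finding $w_j\in\ker L_j$ with $w_1-Mw_2=\beta$ on $O$, where $M:=d\Psi(f_2)$. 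The key observation is that, $\Psi$ being fixed and $\Psi(f_2)$ being $\delta$‑close to $f_1$ on $O$, the cocycle $M$ is $O(\delta)$‑close on the simply connected set $O$ to the \emph{fixed} invertible matrix function $M_0:=[d(\Psi^{-1})\circ f_1]^{-1}$. A multiplicative (Cartan‑type) Cousin splitting with estimates — performed once for $M_0$ on the good pair $(\Omega_1,\Omega_2)$ and perturbed for the $O(\delta)$‑correction by iteration from the additive problem — produces $M=M_1M_2$ with $M_j,M_j^{-1}$ invertible matrix functions on $\Omega_j$ lying in the kernel of the matrix extension of $L_j$; substituting $w_1=M_1\tilde w_1$, $w_2=M_2^{-1}\tilde w_2$ turns the equation into the plain additive Cousin problem $\tilde w_1-\tilde w_2=M_1^{-1}\beta$ on $O$, which is solved exactly and with estimates by the cut‑off‑plus‑Cauchy--Green formula modelled on \eqref{osnovni} (extend $(1-\chi)M_1^{-1}\beta$ by zero over $\Omega_1$, subtract $T_{\Omega_1\cup\Omega_2}$ of its $\bar\partial$, and absorb the zeroth‑order terms of $L_j$ through a Neumann series built from $R_1,R_2$). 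Reversing the substitutions gives a right inverse of $D\Phi(0,0)$ with norm controlled uniformly over $\mathcal W_2$, and Theorem~\ref{sest} then yields the required zero of $\Phi$.

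The main obstacle is precisely this twisted/multiplicative Cousin step: one must establish a Cartan‑type splitting, \emph{with uniform estimates and in the non‑integrable category} (for the operators $L_j$, not merely for $\bar\partial$), for the cocycle $d\Psi(f_2)$. I expect the near‑constancy $M=M_0+O(\delta)$ to be what makes this quantitative, but the bookkeeping that keeps all compositions with $\Psi^{\pm1}$ inside $U_1,U_2$ — in particular along the part of $\partial O$ lying on $\partial\Omega_1$, where $\Psi\circ f_2$ and $\Psi^{-1}\circ f_1$ need not both be defined — and the uniformity of every constant over the precompact family $\mathcal W_2$ (where precompactness of $\mathcal W_2$ and the continuity of $f_2\mapsto L_2$, $f_2\mapsto\Psi\circ f_2$ in the relevant norms enter) constitute the technical heart of the argument. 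A minor preliminary reduction, shrinking $r$ and if necessary replacing $\Omega_1$ by a slightly larger domain still forming a good pair with $\Omega_2$, disposes of the domain issues.
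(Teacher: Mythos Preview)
Your overall setup matches the paper's: both reduce to constructing, uniformly over $\mathcal W_2$, a right inverse of the linearized operator subject to the matching condition on $O$. The difference --- and the gap --- is in how you propose to handle that constraint.

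You want a multiplicative Cartan splitting $M=M_1M_2$ with $M_j$ lying in ``the kernel of the matrix extension of $L_j$''. This step does not go through. First, $L_j$ has a conjugate-linear part (the terms $A(\varphi_j)\overline{V_\zeta}$ and $B_2^{\varphi_j}\overline V$), so matrix multiplication does not interact with $L_j$ the way it does with $\bar\partial$: even if every column of $M_1$ solved $L_1=0$, the substitution $w_1=M_1\tilde w_1$ would not carry $\ker L_1$ to the kernel of any comparable operator in $\tilde w_1$. Second, and more basically, the cocycle $M=d\Psi(f_2)$ --- and even its frozen limit $M_0=[d(\Psi^{-1})\circ f_1]^{-1}$ --- is not holomorphic, not $J_1$-holomorphic, and not in any kernel; it is just a smooth $GL_n(\C)$-valued function. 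Neither the classical Cartan lemma nor a non-integrable analogue applies to it, and the paper explicitly remarks (end of the introduction) that its point is to \emph{replace} the Cartan lemma rather than to generalize it.

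What the paper actually does: it uses the hypothesis that $f_1$ is an embedding to normalize $J_1(f_1)=J_{st}$, so that $A_1(\varphi_1)$ is $O(\delta)$-small and $d_{\varphi_1}\mathcal F_1$ is, up to $O(\delta)$, a generalized-analytic operator $\bar\partial_{B_1,B_2}$. Then, instead of factoring anything, it invokes a Runge theorem for generalized analytic vectors (Goldschmidt, Theorem~\ref{osem}) to produce, for each $W\in\ker d_{\varphi_1}\mathcal F_1$, a matrix function $P_\gamma$ that is $\gamma$-close to $0$ on one compact piece of $O$ and to $Id$ on another, with $\|d_{\varphi_1}\mathcal F_1(P_\gamma W)\|$ small (Lemma~\ref{devet}). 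Combined with cut-offs $\beta_1,\beta_2$ this yields only an \emph{approximate} right inverse landing in $T_{(\varphi_1,\varphi_2)}\mathcal M$, which is then corrected to a genuine one by a Neumann series. The precompactness and uniformity issues you flag are real and enter exactly where you expect (Proposition~\ref{pet}, Lemma~\ref{sedem}(ii)); but the technical core is this Runge-based construction of $P_\gamma$, not a Cartan factorization.
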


Let $(M,J)$ be a smooth almost complex manifold. By Whitney embedding theorem there exist $N\in\mathbb{N}$ and a smooth embedding $i\colon M\to \mathbb{R}^{N}$. This enables us to endow $M$ with a Riemannian metric. Moreover, we can define the Sobolev space of manifold-valued maps defined on $\Omega\subset\mathbb{C}$:
$$W^{1,p}(\Omega,M)=\left\{f\colon \Omega\to M; \; i\circ f\in W^{k,p}(\Omega)\;\textrm{and}\; f(\Omega)\in M \;\textrm{a.e.} \right\}.$$
When $\Omega\subset \mathbb{C}$ is a bounded domain with $\mathcal{C}^1$-boundary we have the following Sobolev type-estimate: there is $C_{\Omega}>0$ such that 
\begin{equation}\label{nova}\textrm{dist}_{\zeta\in \Omega}\left(f(\zeta),g(\zeta)\right)\leq C_{\Omega} \left\| f- g\right\|_{W^{1,p}(\Omega,M)}.\end{equation}  
We denote by $\mathcal{O}_J(\Omega,M)\subset W^{1,p}(\Omega,M)$ the subset of $J$-holomorphic maps.     

\begin{theorem}\label{nov} Let $(\Omega_1,\Omega_2)$ be a good pair and let $(M,J)$ be a smooth almost complex manifold. Let $f_1\in\mathcal{O}_{J}(\Omega_1,M)$ and let $\mathcal{W}_2\subset \mathcal{O}_{J}(\Omega_2,M)$ be a family of maps that is precompact in $W^{1,p}(\Omega_2,M)$. For every $\epsilon>0$ there is $\delta>0$ such that for every $f_2\in\mathcal{W}_2$ satisfying $\left\|i\circ f_1-i \circ f_2\right\|_{W^{1,p}(\Omega_1\cap\Omega_2)}<\delta$ there exists a map $\widehat{f}\in \mathcal{O}_{J}(\Omega_1\cup\Omega_2,M)$ such that: $$\textrm{dist}_{\zeta\in\Omega_1}\left(\widehat{f}(\zeta),f_1(\zeta)\right)<\epsilon\;\;\textrm{ and }\;\; \textrm{dist}_{\zeta\in\Omega_2}\left(\widehat{f}(\zeta),f_2(\zeta)\right)<\epsilon.$$
\end{theorem}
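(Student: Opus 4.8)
The plan is to carry out the scheme of Theorem \ref{ena} on the space of $M$-valued maps, using the Whitney embedding to transfer everything to $\mathbb{R}^{N}$. Enlarging $N$ if necessary so that it is even and the normal bundle of $i(M)$ is trivial, fix a tubular neighborhood $\mathcal{T}\subset\mathbb{R}^{N}$ of $i(M)$ with its smooth retraction $\pi\colon\mathcal{T}\to i(M)$, and extend $J$ to an almost complex structure $\tilde J$ on $\mathbb{R}^{N}$ for which $i(M)$ is a $\tilde J$-complex submanifold and $\det(\tilde J+J_{st})\neq 0$ near $i(M)$; this is elementary linear algebra followed by a cutoff of the complex matrix function appearing in (\ref{A}), and it is the only ingredient genuinely absent from the proof of Theorem \ref{ena}, but a routine one. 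Then $F_{j}:=i\circ f_{j}$ is a $\tilde J$-holomorphic map $\Omega_{j}\to i(M)$, and the theorem reduces to gluing $F_{1}$ and $F_{2}$ into a $\tilde J$-holomorphic map $\widehat G\colon\Omega_{1}\cup\Omega_{2}\to\mathbb{R}^{N}$ whose values lie in $i(M)$; setting $\widehat f:=i^{-1}\circ\widehat G$ then gives a map in $\mathcal{O}_{J}(\Omega_{1}\cup\Omega_{2},M)$, and the distance bounds follow from (\ref{nova}).

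To build $\widehat G$ I would run the construction in (\ref{osnovni}) together with the implicit function theorem, but for the combined operator $\mathcal{F}(G)=\bigl(\bar\partial_{\tilde J}G,\;G-\pi\circ G\bigr)$, whose zeros are precisely the $\tilde J$-holomorphic maps with values in $i(M)$ (equivalently, one works directly on the Banach manifold $W^{1,p}(\Omega_{1}\cup\Omega_{2},M)$). As approximate solution take $G_{0}=\chi F_{1}+(1-\chi)F_{2}$, with $\chi$ the cutoff attached to the good pair $(\Omega_{1},\Omega_{2})$; since $F_{1}$ and $F_{2}$ are $W^{1,p}$-close, hence $\mathcal{C}^{0}$-close, on $\Omega_{1}\cap\Omega_{2}$, the map $G_{0}$ takes values in $\mathcal{T}$. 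Using $\bar\partial_{\tilde J}F_{j}=0$ together with the boundedness of the $L^{p}$-norms of $dF_{1}$ and $dF_{2}$ (the latter uniform over $f_{2}\in\mathcal{W}_{2}$, by precompactness), a direct computation shows that $\mathcal{F}(G_{0})$ is supported in $\overline{\Omega_{1}\cap\Omega_{2}}$ with norm $O(\delta)$ there. Feeding this into the quantitative implicit function theorem (Theorem \ref{sest}) produces $\widehat G$ with $\mathcal{F}(\widehat G)=0$ and $\|\widehat G-G_{0}\|=O(\delta)$; since $G_{0}-F_{j}=O(\delta)$ on $\Omega_{j}$, this yields $\|\widehat G-F_{j}\|_{W^{1,p}(\Omega_{j})}=O(\delta)$, and taking $\delta$ small enough finishes the proof.

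The heart of the matter, and the point at which precompactness of $\mathcal{W}_{2}$ (together with $f_{1}$ being a single fixed map) is essential, is producing a right inverse for the linearization $\mathcal{F}'(G)$ with norm bounded uniformly as $G$ runs over a neighborhood of the precompact set $\{G_{0}\}_{f_{2}\in\mathcal{W}_{2}}$. The linearization of $G\mapsto G-\pi\circ G$ at a map into $i(M)$ is the fibrewise orthogonal projection onto the normal bundle, so solving $\mathcal{F}'(G)v=(w,u)$ prescribes the normal part of $v$ and reduces the tangential part to a perturbed $\bar\partial$-equation for a section of $G^{*}(Ti(M))$; trivializing this bundle over the simply connected domain $\Omega_{1}\cup\Omega_{2}$ and splitting by means of $\chi$ and $T_{\Omega_{1}\cup\Omega_{2}}$, exactly as in (\ref{osnovni}) and in the proof of Theorem \ref{ena}, gives a bounded right inverse, whose norm is controlled uniformly because the relevant trivializations and zeroth-order coefficients depend continuously on $G$ and hence stay in a compact family. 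I expect this uniform estimate, rather than the defect bound or the extension of $J$, to be the main technical obstacle, precisely as it is for Theorem \ref{ena}.
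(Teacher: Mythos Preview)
Your approach differs substantially from the paper's. The paper does not extend $J$ to an ambient Euclidean structure or run a constrained Newton iteration; instead it passes to the graph $L_f(\zeta)=(\zeta,f(\zeta))$ in $\mathbb{R}^2\times M$ and invokes a result of Sukhov--Tumanov producing, for each $J$-holomorphic $f$, a single coordinate chart $\Phi_f$ around the entire graph in which the pushed-forward structure lies in $\mathcal{J}$ and in fact equals $J_{st}$ along $\Phi_f\circ L_f$. Applying this to $f_1$ gives one chart; precompactness of $\mathcal{W}_2$ yields finitely many charts $\Phi_j$ covering all the graphs $L_{f_2}$. Theorem~\ref{nov} then follows by applying Theorem~\ref{dva} in $\mathbb{R}^{2n+2}$ with transition maps $\Psi_j=\Phi_{f_1}\circ\Phi_j^{-1}$. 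The graph trick is what makes $\tilde f_1=\Phi_{f_1}\circ L_{f_1}$ an embedding (a hypothesis of Theorem~\ref{dva}) even when $f_1$ is not, and the Sukhov--Tumanov chart is what delivers $\det(\tilde J+J_{st})\neq 0$ without any global argument. So the paper buys Theorem~\ref{nov} essentially for free once Theorem~\ref{dva} is in hand.

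Your route, closer in spirit to McDuff--Salamon's intrinsic gluing, is plausible and bypasses Theorem~\ref{dva} altogether; but the step you call routine --- extending $J$ to $\tilde J$ on $\mathbb{R}^N$ with $i(M)$ a $\tilde J$-complex submanifold \emph{and} $\det(\tilde J+J_{st})\neq 0$ --- is not quite elementary linear algebra. The latter condition depends on how the embedded tangent spaces of $i(M)$ sit relative to the ambient $J_{st}$, and a generic Whitney embedding gives no control over this; one must either adapt the embedding (for instance, double the dimension so that $i(M)$ becomes totally real for $J_{st}$, after which the condition can be checked by hand) or abandon the specific form~(\ref{A}) and work directly on the Banach manifold $W^{1,p}(\Omega,M)$, in which case Propositions~\ref{stiri}--\ref{pet} and Theorem~\ref{sest} are no longer literally available and must be reproved. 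Either fix is standard, but this is exactly the obstacle the paper's graph-chart reduction to Theorem~\ref{dva} is designed to sidestep.
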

\noindent Theorems 2 and 3 can be understood as a gluing technique similar to those developed by McDuff and Salamon for compact $J$-holomorphic curves  \cite{McDuff2,McDuff}. That is, following their work we provide analogous methods for $J$-holomorphic discs. In particular, we provide a generalized version of Rosay's solution to the Cousin problem in which the Cartan lemma is replaced by these techniques. Such an approach is new even in the integrable case.  

\section{The local Cousin problem} 
In this section, we first introduce the non-linear techniques developed in \cite{BERKUZ} and then prove Theorem \ref{ena}. The reader should note that in the original reference the $\bar{\partial}_J$-equation was discussed only on the unit disc. However, the proofs remain the same for any bounded simply connected domain $\Omega\subset\C$. Therefore, we will mainly omit them. The key tool of our non-linear analysis is the following version of implicit function theorem for Banach spaces. It can be understood as a sufficient condition for the convergence of a Newton-type iteration $x_{n+1}=x_{n}-Q_{x_0}\mathcal{F}(x_n)$, see \cite[Appendix A.3.]{McDuff}. 

\begin{theorem}\label{tri}
\label{theoimpl}
Let $X$ and $Y$ be two Banach spaces and let $\mathcal{F}\colon U \to Y$ be a $\mathcal{C}^1$-map defined on an open set $U\subset X$. Given $x_0\in U$ assume that the differential $d_{x_0}\mathcal{F}$ admits a bounded right inverse denoted by $Q_{x_0}$. Fix $\rho>0$, $L>0$ and $C>0$ such that:
\begin{itemize} 
\item[i)] The operator norm of the right inverse satisfies $\left\|Q_{x_0}\right\|_{op}\leq C$.
\item[ii)] If $\left\|x-x_0\right\|_X<\rho$ then $x\in U$ and 
$\left\|d_x\mathcal{F}-d_{x_0}\mathcal{F}\right\|_{op}\leq L \left\|x-x_0\right\|_X.$
\end{itemize}
The following conclusion is valid: if $\displaystyle \left\|\mathcal{F}(x_0)\right\|_Y<\min\left\{\frac{\rho}{4C},\frac{1}{8C^2L}\right\}$ there exists $x\in U$ such that $\mathcal{F}(x)=0$ and 
$\left\|x-x_0\right\|_X\leq2C\left\|\mathcal{F}(x_0)\right\|_Y.$
\end{theorem}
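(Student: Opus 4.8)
The plan is to run the Newton-type iteration indicated in the statement, namely $x_{n+1}=x_n-Q_{x_0}\mathcal{F}(x_n)$ starting from the given $x_0$, and to show that it is well defined, stays inside $B(x_0,\rho)$, converges, and that its limit solves $\mathcal{F}(x)=0$ with the asserted bound. Throughout set $\eta=\|\mathcal{F}(x_0)\|_Y$ and $\delta_n=\|x_{n+1}-x_n\|_X$.

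First I would record the two one-step estimates that drive everything. From $\|Q_{x_0}\|_{op}\le C$ we get $\delta_n\le C\|\mathcal{F}(x_n)\|_Y$; in particular $\delta_0\le C\eta<\rho/4$. For the quadratic estimate, note that $x_{n+1}-x_n=-Q_{x_0}\mathcal{F}(x_n)$ and that $Q_{x_0}$ is a \emph{right} inverse of $d_{x_0}\mathcal{F}$, so $d_{x_0}\mathcal{F}(x_{n+1}-x_n)=-\mathcal{F}(x_n)$. Combining this with the fundamental theorem of calculus $\mathcal{F}(x_{n+1})-\mathcal{F}(x_n)=\int_0^1 d_{x_n+t(x_{n+1}-x_n)}\mathcal{F}\,(x_{n+1}-x_n)\,\du t$ gives
$$\mathcal{F}(x_{n+1})=\int_0^1\bigl(d_{x_n+t(x_{n+1}-x_n)}\mathcal{F}-d_{x_0}\mathcal{F}\bigr)(x_{n+1}-x_n)\,\du t.$$
As long as the whole segment $[x_n,x_{n+1}]$ lies in $B(x_0,\rho)$, condition (ii) bounds the integrand and yields $\|\mathcal{F}(x_{n+1})\|_Y\le L\bigl(\max_{t\in[0,1]}\|x_n+t(x_{n+1}-x_n)-x_0\|_X\bigr)\delta_n$.

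The heart of the argument is an induction establishing simultaneously that $x_0,\dots,x_{n+1}\in B(x_0,\rho)$ and that $\delta_k\le 2^{-k}\delta_0$ for $0\le k\le n$; the base case is just $\delta_0\le C\eta<\rho/4$. Granting the inductive hypothesis, $\|x_m-x_0\|_X\le\sum_{k<m}\delta_k<2\delta_0\le 2C\eta<\rho/2$ for every $m\le n+1$, so $[x_n,x_{n+1}]\subset B(x_0,\rho)$ and the maximum above is $\le 2\delta_0$. The displayed identity then gives $\|\mathcal{F}(x_{n+1})\|_Y\le 2L\delta_0\delta_n$, hence $\delta_{n+1}\le C\|\mathcal{F}(x_{n+1})\|_Y\le 2C^2L\eta\,\delta_n<\tfrac14\delta_n\le 2^{-(n+1)}\delta_0$, the bound $2C^2L\eta<\tfrac14$ being exactly the hypothesis $\eta<1/(8C^2L)$; and $\|x_{n+2}-x_0\|_X<2\delta_0<\rho$ closes the step.

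Finally, $\sum_n\delta_n\le 2\delta_0<\infty$, so $(x_n)$ is Cauchy in the Banach space $X$ and converges to some $x$ with $\|x-x_0\|_X\le 2\delta_0\le 2C\eta=2C\|\mathcal{F}(x_0)\|_Y$, which is the claimed estimate and also forces $x\in U$. Since $\|\mathcal{F}(x_n)\|_Y\le 2L\delta_0\delta_{n-1}\to 0$ and $\mathcal{F}$ is continuous, $\mathcal{F}(x)=\lim_n\mathcal{F}(x_n)=0$. There is no deep obstacle here; the only thing needing real care is the bookkeeping of the induction — one must verify that every iterate, and every point of the connecting segments, lies in $B(x_0,\rho)$ before invoking (ii), and check that the two thresholds $\rho/(4C)$ and $1/(8C^2L)$ are precisely what make the geometric decay $\delta_{n+1}\le\tfrac12\delta_n$ and the containment in $B(x_0,\rho)$ hold simultaneously.
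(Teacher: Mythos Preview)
Your proof is correct and is exactly the Newton-type iteration $x_{n+1}=x_n-Q_{x_0}\mathcal{F}(x_n)$ that the paper indicates; the paper does not give its own proof but simply refers to \cite[Appendix A.3.]{McDuff}, and your argument is the standard one found there. The bookkeeping you flag---keeping the iterates and connecting segments inside $B(x_0,\rho)$ and matching the thresholds $\rho/(4C)$, $1/(8C^2L)$ to the geometric decay---is handled correctly.
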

We apply this statement on a $\bar{\partial}_J$-operator based on (\ref{A}). Precisely, for $p>2$ and a bounded domain $\Omega\subset \C$ let $\mathcal{F}\colon W^{1,p}(\Omega)\to L^p(\Omega)$ be given by
\begin{eqnarray}\label{F}\mathcal{F}(f)=f_{\bar{\zeta}}+A(f)\overline{f_\zeta}.\end{eqnarray}
For $\varphi\in W^{1,p}(\Omega)$ its linearization $d_\varphi\mathcal{F}\colon W^{1,p}(\Omega)\to L^p(\Omega)$ is of the form
$$d_\varphi\mathcal{F}(V) =  V+A(\varphi)\overline{V_{\zeta}}+\sum_{j=1}^n\left(\frac{\partial A}{\partial z_j}(\varphi)V_j+\frac{\partial A}{\partial \bar{z}_j}(\varphi)\overline{V}_j \right) \ \overline{\varphi_\zeta}.$$
We can rewrite this into
$$d_\varphi\mathcal{F}(V) =  V+A(\varphi)\overline{V_{\zeta}}+B_1^\varphi V+B_2^\varphi \overline{V}$$
where $B_1^\varphi$ and $B_2^\varphi$ are matrix functions with coefficients of class $L^p(\Omega)$ and depending continuously on the map $\varphi\in W^{1,p}(\Omega)$. Moreover, in the context of Theorem \ref{tri} we have the following proposition. 

\begin{proposition}\label{stiri} The operator $d_\varphi\mathcal{F}$ satisfies the following two conditions:
\begin{itemize}
\item[i)] The linear map $d_\varphi\mathcal{F}\colon W^{1,p}(\Omega)\to L^p(\Omega)$ admits a bounded right inverse $Q_\varphi$ for every $\varphi\in W^{1,p}(\Omega)$.
\item[ii)] For every $C>0$ there is $L>0$ such that if $\left\|d\varphi\right\|_{L^p(\Omega)}<C$ and  $\left\|\tilde{\varphi}-\varphi\right\|_{W^{1,p}(\Omega)}<1$ we have $\left\|d_{\tilde{\varphi}}\mathcal{F}-d_\varphi\mathcal{F}\right\|_{op}<L \left\|\tilde{\varphi}-\varphi\right\|_{W^{1,p}(\Omega)}.$
\end{itemize}
\end{proposition}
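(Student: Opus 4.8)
The plan is to treat the two conditions separately, item (ii) being a routine estimate in Sobolev--Lebesgue spaces and item (i) the substantive point. For (i) I would split $d_\varphi\mathcal{F}=L_\varphi+K_\varphi$ into the leading part $L_\varphi V=V_{\bar\zeta}+A(\varphi)\overline{V_\zeta}$ and the zeroth-order part $K_\varphi V=B_1^\varphi V+B_2^\varphi\overline{V}$. Since $p>2$, the embedding $W^{1,p}(\Omega)\hookrightarrow\mathcal{C}(\overline{\Omega})$ makes $\varphi$, hence $A(\varphi)$, continuous on $\overline{\Omega}$, and $\varphi(\overline{\Omega})$ is compact, so that $\|A(\varphi)\|_{L^\infty(\Omega)}<1$ (recall $J\in\mathcal{J}$); also $B_1^\varphi,B_2^\varphi\in L^p(\Omega)$. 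I would then look for a solution of $d_\varphi\mathcal{F}(V)=g$ in the form $V=T_\Omega h$, $h\in L^p(\Omega)$: then $V_{\bar\zeta}=h$ and $V_\zeta=Sh$, where $S\colon h\mapsto(T_\Omega h)_\zeta$ is a Calder\'on--Zygmund (Beurling) operator bounded on every $L^p$, $1<p<\infty$, and the equation becomes $(\mathrm{Id}+\mathcal{R})h=g$ on $L^p(\Omega)$ with $\mathcal{R}h=A(\varphi)\overline{Sh}+B_1^\varphi T_\Omega h+B_2^\varphi\overline{T_\Omega h}$. Now $h\mapsto h+A(\varphi)\overline{Sh}$ is invertible on every $L^p(\Omega)$ by freezing the \emph{continuous} coefficient $A(\varphi)$, comparing with the constant-coefficient model (whose symbol is invertible precisely because $\|A(\varphi)\|_{L^\infty}<1$) and invoking the Mikhlin multiplier theorem with the usual partition-of-unity absorption; and $h\mapsto B_1^\varphi T_\Omega h+B_2^\varphi\overline{T_\Omega h}$ is compact on $L^p(\Omega)$ since $T_\Omega$ maps $L^p(\Omega)$ into $W^{1,p}(\Omega)$, which embeds compactly into $\mathcal{C}(\overline{\Omega})$, and multiplication by $B_j^\varphi\in L^p$ is then bounded into $L^p(\Omega)$. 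Hence $\mathrm{Id}+\mathcal{R}$ is Fredholm of index zero; its invertibility --- equivalently, the surjectivity of $d_\varphi\mathcal{F}$ --- is the classical solvability of generalized Cauchy--Riemann (Bers--Vekua) systems and follows from the similarity principle applied to $d_\varphi\mathcal{F}(V)=0$ (see \cite{BERKUZ}). The desired bounded right inverse is then $Q_\varphi=T_\Omega\,(\mathrm{Id}+\mathcal{R})^{-1}$.

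I expect this last step --- obtaining the right inverse \emph{for every} $p>2$ --- to be the main obstacle. Under a smallness hypothesis on $\|A\|_{L^\infty}$ (which is, in effect, the regularity assumption of \cite{BERKUZ}) one could invert $\mathrm{Id}+A(\varphi)\overline{S(\cdot)}$ by a Neumann series, but only in a range of $p$ governed by $\|S\|_{L^p}$; the strength of the present statement is that it is enough for the leading coefficient $A(\varphi)$ to be \emph{continuous}, which here is automatic and demands nothing of $J$ beyond smoothness (indeed beyond continuity). The remaining ingredients --- compactness of the lower-order terms, the Fredholm bookkeeping, the similarity-principle input --- are standard linear $\bar\partial$-theory.

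For (ii) I would simply compute
$(d_{\tilde\varphi}\mathcal{F}-d_\varphi\mathcal{F})(V)=\bigl(A(\tilde\varphi)-A(\varphi)\bigr)\overline{V_\zeta}+\bigl(B_1^{\tilde\varphi}-B_1^\varphi\bigr)V+\bigl(B_2^{\tilde\varphi}-B_2^\varphi\bigr)\overline{V}$
and bound the $L^p$-norm by H\"older: the first term by $\|A(\tilde\varphi)-A(\varphi)\|_{L^\infty}\,\|V\|_{W^{1,p}}$, the other two by $(\|B_1^{\tilde\varphi}-B_1^\varphi\|_{L^p}+\|B_2^{\tilde\varphi}-B_2^\varphi\|_{L^p})\,\|V\|_{\mathcal{C}(\overline{\Omega})}$, and then $\|V\|_{\mathcal{C}(\overline{\Omega})}\le c\,\|V\|_{W^{1,p}}$ by Sobolev. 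As $\|\tilde\varphi-\varphi\|_{W^{1,p}(\Omega)}<1$, the images $\varphi(\overline{\Omega})$ and $\tilde\varphi(\overline{\Omega})$ lie in one fixed compact subset of $\R^{2n}$ on which $A$ is $\mathcal{C}^2$, so the mean value theorem gives $\|A(\tilde\varphi)-A(\varphi)\|_{L^\infty}\le c'\|\tilde\varphi-\varphi\|_{W^{1,p}}$; and writing, e.g.,
$B_1^{\tilde\varphi}-B_1^\varphi=\bigl(\partial_z A(\tilde\varphi)-\partial_z A(\varphi)\bigr)\overline{\tilde\varphi_\zeta}+\partial_z A(\varphi)\bigl(\overline{\tilde\varphi_\zeta}-\overline{\varphi_\zeta}\bigr)$
(and similarly for the $\bar z$-derivatives and for $B_2$), the bounds $\|d\varphi\|_{L^p}<C$, $\|\tilde\varphi_\zeta-\varphi_\zeta\|_{L^p}<1$ and the uniform $\mathcal{C}^1$-control on $A$ give $\|B_j^{\tilde\varphi}-B_j^\varphi\|_{L^p}\le L_0(C)\,\|\tilde\varphi-\varphi\|_{W^{1,p}}$. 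Collecting constants yields the claim with $L=L(C,\Omega,p,J)$.
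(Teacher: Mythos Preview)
The paper does not give a self-contained proof of this proposition: after stating it, the text records that part~(ii) ``is rather straightforward, see \cite[p.\,7]{BERKUZ}'' and that part~(i) ``is provided by \cite[Theorem~2]{BERKUZ} and requires a careful analysis with singular integral operators.'' Your treatment of~(ii) is exactly the direct H\"older/Sobolev computation the paper has in mind, and your outline for~(i) --- the ansatz $V=T_\Omega h$, the reduction to $(\mathrm{Id}+\mathcal{R})h=g$ on $L^p(\Omega)$ via the Beurling operator $S$, the invertibility of the principal part $h\mapsto h+A(\varphi)\,\overline{Sh}$ by freezing the continuous coefficient, and the compactness of the zeroth-order remainder --- is precisely the singular-integral scheme the paper is pointing to. In that sense your proposal and the paper's (deferred) argument coincide.

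One point in~(i) deserves more care. You pass from ``$\mathrm{Id}+\mathcal{R}$ is Fredholm of index zero'' to ``$\mathrm{Id}+\mathcal{R}$ is invertible'' by invoking the similarity principle for $d_\varphi\mathcal{F}(V)=0$. But the similarity principle describes the local structure of \emph{solutions} of the homogeneous equation; it does not by itself yield injectivity of $\mathrm{Id}+\mathcal{R}$ on $L^p$. Concretely, $(\mathrm{Id}+\mathcal{R})h=0$ only says $T_\Omega h\in\ker d_\varphi\mathcal{F}$, and that kernel is infinite-dimensional; nothing in the similarity principle excludes nonzero elements of $\ker d_\varphi\mathcal{F}\cap\mathrm{range}(T_\Omega)$. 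What one actually needs here is the \emph{solvability} theory for Vekua-type systems (existence of a bounded solution operator for $\bar\partial_{B_1,B_2}$), combined with your invertibility of the principal part, to build $Q_\varphi$ directly rather than via an index-zero/injectivity argument. This is exactly the ``careful analysis with singular integral operators'' the paper defers to \cite{BERKUZ}; your architecture is correct, but the lever you name at the last step is not the right one.
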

\noindent The proof of the local Lipshitz property in $ii)$ is rather straightforward, see \cite[p. 7]{BERKUZ}. In contrast, $i)$ is provided by \cite[Theorem 2]{BERKUZ} and requires a carefull analysis with singular integral operators. In particular, for $J\equiv J_{st}$ the operator $Q_{\varphi}$ equals the Cauchy-Green operator defined in (\ref{CG}) and is regarded as a bounded map $T_\Omega\colon L^{p}(\Omega)\to W^{1,p}(\Omega)$. However, in general $Q_\varphi$ depends on $\varphi\in W^{1,p}(\Omega)$ and, by the construction, it varies continuously only in the case of the so-called regular complex structures, see \cite[p.\;5]{BERKUZ}. Nevertheless, we can prove the following proposition which is new and will allow us to work with non-regular structures as well.
\begin{proposition}\label{pet}
Let $\mathcal{W}\subset W^{1,p}(\Omega)$ be precompact. There is $C>0$ such that for $\varphi\in \mathcal{W}$ the operator $d_\varphi\mathcal{F}$ admits a right inverse $Q_\varphi$ with $\left\|Q_{\varphi}\right\|_{op}<C$. 
\end{proposition}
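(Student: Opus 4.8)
The plan is to combine the pointwise existence of a bounded right inverse from Proposition \ref{stiri}(i) with a compactness-and-continuity argument that upgrades it to a \emph{uniform} bound over the precompact family $\mathcal{W}$. The obstruction, as the text emphasizes, is that the assignment $\varphi\mapsto Q_\varphi$ need not be continuous when $J$ is not regular, so we cannot simply say ``$\varphi\mapsto\|Q_\varphi\|_{op}$ is continuous on a compact set, hence bounded.'' Instead I would exploit that what \emph{does} vary continuously is the operator $d_\varphi\mathcal{F}$ itself (this is built into the statement that $B_1^\varphi, B_2^\varphi$ depend continuously on $\varphi\in W^{1,p}(\Omega)$, and is quantified by Proposition \ref{stiri}(ii)), and then produce right inverses for nearby $\varphi$ by a perturbation series rather than by the construction attached to $\varphi$.

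First I would split $d_\varphi\mathcal{F}(V)=V+A(\varphi)\overline{V_\zeta}+B_1^\varphi V+B_2^\varphi\overline{V}$ into its ``principal part'' $P_\varphi(V)=V+A(\varphi)\overline{V_\zeta}$, whose right inverse is exactly the one furnished by \cite[Theorem 2]{BERKUZ} in the homogeneous case, and the zeroth-order remainder $R_\varphi(V)=B_1^\varphi V+B_2^\varphi\overline{V}$. One then notes that $R_\varphi$, viewed as an operator $W^{1,p}(\Omega)\to L^p(\Omega)$, is actually \emph{compact}: its coefficients lie in $L^p(\Omega)$ and it factors through the compact Sobolev embedding $W^{1,p}(\Omega)\hookrightarrow \mathcal{C}(\overline\Omega)$. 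So $d_\varphi\mathcal{F}$ is a compact perturbation of the invertible-from-the-right operator $P_\varphi$; combined with Proposition \ref{stiri}(i) (which says $d_\varphi\mathcal{F}$ is already known to be right invertible), a standard Fredholm argument gives a right inverse of the form $Q_\varphi = S_\varphi(\mathrm{id}+R_\varphi S_\varphi|_{(\ker)^\perp}\cdots)$; more robustly, I would simply invoke the open mapping / bounded right inverse theorem once surjectivity of $d_\varphi\mathcal{F}$ is in hand and keep track of the constants via Proposition \ref{stiri}(ii).

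The heart of the argument is the covering step. For each $\varphi\in\overline{\mathcal{W}}$ (which is compact in $W^{1,p}(\Omega)$), fix a right inverse $Q_\varphi$ with $\|Q_\varphi\|_{op}\le C_\varphi$, and set $C = C_\varphi$, $\rho_\varphi = \min\{1,\,1/(2C_\varphi L)\}$ where $L=L(C')$ comes from Proposition \ref{stiri}(ii) with $C'$ a uniform bound for $\|d\psi\|_{L^p(\Omega)}$ over $\psi\in\overline{\mathcal{W}}$ (such a $C'$ exists precisely because $\overline{\mathcal{W}}$ is compact, hence bounded in $W^{1,p}$). Then for $\|\psi-\varphi\|_{W^{1,p}(\Omega)}<\rho_\varphi$ one has $\|d_\psi\mathcal{F}-d_\varphi\mathcal{F}\|_{op}\le L\rho_\varphi \le 1/(2C_\varphi)$, so the Neumann series $\sum_{k\ge0}\big((\mathrm{id}-d_\psi\mathcal{F}\,Q_\varphi)\big)^k$ converges in operator norm and $Q_\psi := Q_\varphi\sum_{k\ge0}(\mathrm{id}-d_\psi\mathcal{F}\,Q_\varphi)^k$ is a right inverse of $d_\psi\mathcal{F}$ with $\|Q_\psi\|_{op}\le 2C_\varphi$. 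The balls $\{\psi:\|\psi-\varphi\|_{W^{1,p}}<\rho_\varphi\}$, $\varphi\in\overline{\mathcal{W}}$, cover the compact set $\overline{\mathcal{W}}$; extract a finite subcover corresponding to $\varphi_1,\dots,\varphi_m$ and put $C := 2\max_i C_{\varphi_i}$. Every $\varphi\in\mathcal{W}$ lies in one of these balls, hence admits a right inverse of norm $\le C$, which is the claim. I expect the only genuinely delicate point to be confirming that $d_\psi\mathcal{F}\,Q_\varphi$ makes sense and differs from the identity by the advertised small operator — i.e.\ that subtracting $d_\varphi\mathcal{F}\,Q_\varphi=\mathrm{id}$ really leaves $(d_\psi\mathcal{F}-d_\varphi\mathcal{F})Q_\varphi$, which is immediate, and that the operator-norm estimate of Proposition \ref{stiri}(ii) is applicable uniformly, for which the compactness of $\mathcal{W}$ (giving the uniform gradient bound $C'$) is exactly what is needed.
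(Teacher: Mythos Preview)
Your core argument --- the ``heart'' in the third paragraph --- is exactly the paper's proof: use Proposition~\ref{stiri}(ii) to show that $d_\psi\mathcal{F}\circ Q_\varphi$ is a small perturbation of the identity for $\psi$ near $\varphi$, invert it (your Neumann series is the paper's $(d_{\tilde\varphi}\mathcal{F}\circ Q_\varphi)^{-1}$ written out), obtain a right inverse of norm at most $2\|Q_\varphi\|_{op}$, then extract a finite subcover and take the maximum. The first two paragraphs on the $P_\varphi/R_\varphi$ splitting and Fredholm structure are an unnecessary detour --- you never use them, and the paper does not either --- but they do no harm.
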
 
\begin{proof}
We seek $C>0$ such that given $\varphi \in\mathcal{W}$ and $W\in L^{p}(\Omega)$ there is $V\in W^{1,p}(\Omega)$ that satisfies $d_\varphi\mathcal{F}(V)= W$ and $\left\|V\right\|_{W^{1,p}(\Omega)}\leq C \left\|W\right\|_{L^p(\Omega)}.$ As stated in the Proposition  5, for every $\varphi\in\mathcal{W}$ the operator $d_\varphi\mathcal{F}$ admits a bounded right inverse $Q_\varphi$. However, such an inverse is not unique and can be chosen to vary continuously near $\varphi$. 

Since the $L^p$-norm of $d\varphi$ is uniformly bounded for $\varphi\in\mathcal{W}$, the operator $\varphi\mapsto d_\varphi\mathcal{F}$ is locally Lipshitz by part $ii)$ of Proposition \ref{stiri}. Therefore there is $\delta_\varphi>0$ such that if $\left\|\varphi-\tilde{\varphi}\right\|_{W^{1,p}(\Omega)}<\delta_\varphi$ we have
$$\left\|d_{\tilde{\varphi}}\mathcal{F}\circ Q_\varphi-Id\right\|_{op}<\frac{1}{2}.$$
Thus an alternative selection for the right inverse of $d_{\tilde{\varphi}}\mathcal{F}$ is the operator
$$Q_{\tilde{\varphi}}=Q_\varphi\left(d_{\tilde{\varphi}}\mathcal{F}\circ Q_\varphi\right)^{-1}.$$
Moreover, note that such an operator satisfies
$$\left\|Q_{\tilde{\varphi}}\right\|_{op}\leq 2\left\|Q_\varphi\right\|_{op}.$$
Since $\mathcal{W}$ is precompact we can cover it with a finite union of $W^{1,p}$-balls
$$\mathcal{W}\subset \cup_{k=1}^m \mathbb{B}(\varphi_k,\delta_{\varphi_k}).$$ 
Therefore we can produce solutions of the linearized equation obeying the required bound for
$$C=2\cdot \max_{1\leq k\leq m}\left\|Q_{\varphi_k}\right\|_{op}.$$
This produces the family of right inverses $Q_{\varphi}$, $\varphi\in\mathcal{W}$, that we seek.
\end{proof}

The last statement that we need is \cite[Theorem 5]{BERKUZ}. We cite it below. Note that it is a direct implication of Theorem \ref{tri} and Proposition \ref{stiri}. Given a map $\varphi$ with a small $\bar{\partial}_J$-derivative it provides its $J$-holomorphic approximation. 
\begin{theorem}\label{sest}
Given $C>0$ there exists $\delta_C>0$ such that for every map $\varphi \in W^{1,p}(\Omega)$ satisfying 
$$\|d\varphi\|_{L^p(\Omega)}\leq C,\;\; \|Q_\varphi\|_{op}\leq C,\;\;  \left\|\mathcal{F}(\varphi)\right\|_{L^p(\Omega)}<\delta_C,$$
there exists a $J$-holomorphic disc $\widehat{f}\in W^{1,p}(\Omega)$  such that 
$$\left\|\widehat{f}-\varphi\right\|_{W^{1,p}(\Omega)}\leq 2C\left\|\mathcal{F}(\varphi)\right\|_{L^p(\Omega)}.$$ 
\end{theorem}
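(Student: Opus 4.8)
The statement is the concrete output of the abstract Newton scheme of Theorem~\ref{tri} applied to the operator $\mathcal{F}$ from (\ref{F}), so the plan is simply to verify that Theorem~\ref{tri} applies with the data supplied here. First I would fix the setting: take $X=W^{1,p}(\Omega)$, $Y=L^{p}(\Omega)$, the open set $U=X$ (all of $W^{1,p}(\Omega)$, since $\mathcal{F}$ is defined and of class $\mathcal{C}^1$ there --- for $p>2$ the embedding $W^{1,p}(\Omega)\hookrightarrow\mathcal{C}(\overline{\Omega})$ makes $f\mapsto A(f)$ a $\mathcal{C}^1$ superposition operator into $L^{\infty}(\Omega)$, and multiplication by $\overline{f_{\zeta}}\in L^p(\Omega)$ is bounded and bilinear into $L^p(\Omega)$, whence $\mathcal{F}$ is $\mathcal{C}^1$ with the differential written out before Proposition~\ref{stiri}), and the base point $x_0=\varphi$.

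Next I would read off conditions (i) and (ii) of Theorem~\ref{tri}. For (i): Proposition~\ref{stiri}(i) provides a bounded right inverse $Q_\varphi$ of $d_\varphi\mathcal{F}$, and the first two hypotheses of the present theorem are exactly the bound $\|Q_\varphi\|_{op}\le C$ together with $\|d\varphi\|_{L^{p}(\Omega)}\le C$. For (ii): I would take $\rho:=1$, so that the inclusion $\{x:\|x-\varphi\|_{X}<\rho\}\subset U$ holds trivially; since $\|d\varphi\|_{L^{p}(\Omega)}\le C$, Proposition~\ref{stiri}(ii) then furnishes a constant $L=L(C)>0$ --- depending on $C$ only, not on the individual $\varphi$ --- with $\|d_{\tilde\varphi}\mathcal{F}-d_\varphi\mathcal{F}\|_{op}\le L\,\|\tilde\varphi-\varphi\|_{W^{1,p}(\Omega)}$ whenever $\|\tilde\varphi-\varphi\|_{W^{1,p}(\Omega)}<1=\rho$.

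Then I would invoke Theorem~\ref{tri} with these $\rho$, $C$, $L$, setting $\delta_C:=\min\{1/(4C),\,1/(8C^{2}L)\}$, a quantity depending on $C$ alone. For any $\varphi$ satisfying the three hypotheses with this $\delta_C$, Theorem~\ref{tri} returns $\widehat{f}\in W^{1,p}(\Omega)$ with $\mathcal{F}(\widehat{f})=0$, i.e.\ $\widehat{f}_{\bar{\zeta}}+A(\widehat{f})\overline{\widehat{f}_{\zeta}}=0$, which by (\ref{A}) means that $\widehat{f}$ is $J$-holomorphic, together with the estimate $\|\widehat{f}-\varphi\|_{W^{1,p}(\Omega)}\le 2C\,\|\mathcal{F}(\varphi)\|_{L^p(\Omega)}$ --- exactly the assertion. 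The two occurrences of the letter $C$ in the hypotheses, bounding $\|d\varphi\|_{L^{p}}$ and $\|Q_\varphi\|_{op}$, are merged purely for convenience; one could keep them separate and take the maximum.

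No genuinely difficult step is hidden in this deduction: the substance was already spent on Proposition~\ref{stiri} (the bounded right inverse and the local Lipschitz property) and on Proposition~\ref{pet} (which is what makes the hypothesis $\|Q_\varphi\|_{op}\le C$ attainable \emph{uniformly} over precompact families, and hence what lets Theorem~\ref{sest} be applied in the proof of Theorem~\ref{ena}). The one point that does need attention is precisely this uniformity: $\delta_C$ must not depend on the particular $\varphi$, and this is what the bound $\|d\varphi\|_{L^{p}(\Omega)}\le C$ secures, since by Proposition~\ref{stiri}(ii) the Lipschitz constant $L$ --- and therefore $\delta_C$ --- is a function of $C$ only.
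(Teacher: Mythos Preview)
Your proof is correct and matches the paper's own treatment: the paper does not give a separate proof of Theorem~\ref{sest} but simply states that it ``is a direct implication of Theorem~\ref{tri} and Proposition~\ref{stiri},'' which is precisely the verification you carry out. Your explicit choice $\rho=1$ and $\delta_C=\min\{1/(4C),\,1/(8C^2L)\}$ with $L=L(C)$ from Proposition~\ref{stiri}(ii) makes the dependence on $C$ alone transparent.
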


We are now ready to solve the local version of the Cousin problem. 
\begin{proof}[Proof of Theorem 1]
As in the book of McDuff and Salamon \cite{McDuff2,McDuff} our gluing method consists of two steps. Firstly, we define a family of so-called pregluing maps $\varphi$ defined on $\Omega_1\cup\Omega_2$ and with uniformly $L^p$-small derivatives $\bar{\partial}_J\varphi$. Secondly, applying Theorem \ref{sest}, we provide a $J$-holomorphic approximation $\widehat{f}$ for each of its elements. This is similar to (\ref{osnovni}), where the map $\varphi=\chi f_1+(1-\chi)f_2$ was approximated by 
$$\widehat{f}=\chi f_1+ (1-\chi)f_2 - T_{\Omega_1\cup\Omega_2}\left[\chi_{\bar{\zeta}}(f_1-f_2)\right]=\varphi-T_{\Omega_1\cup\Omega_2}\left[\bar{\partial}\varphi_{J_{st}}\right].$$ 
In the above expression the smallness of $\bar{\partial}_{J_{st}}\varphi$ is guaranteed by the $\mathcal{C}^0$-proximity of the images $f_1(\Omega_1\cap \Omega_2)$ and $f_2(\Omega_1\cap \Omega_2)$. Moreover, in the spirit of Theorem \ref{tri}, the Newton-type iteration ends after the first step. In contrast, we expect the Newton iteration to be infinite in our non-integrable version of this construction while the $L^p$-norm of $\bar{\partial}_J$ will be controlled by 
$$\left\|f_1-f_2\right\|_{W^{1,p}(\Omega_1\cap \Omega_2)}<\delta.$$
That is, we seek an explicit bound for $\delta>0$ in Theorem 1.
\begin{lemma}\label{sedem}
Under the assumptions of Theorem \ref{ena} the set of all possible pregluing maps
$\mathcal{W}=\left\{\chi f_1+(1-\chi)f_2;\; f_1\in\mathcal{W}_1,\;f_2\in\mathcal{W}_2\right\}$
satisfies the following two properties:
\begin{itemize}
\item[i)] There is $C_0>0$ so that $\varphi\in\mathcal{W}$ implies $\left\|\mathcal{F}(\varphi)\right\|_{L^p(\Omega_1\cup\Omega_2)}<C_0\cdot\delta.$
\item[ii)] The set $\mathcal{W}\subset W^{1,p}(\Omega_1\cup \Omega_2)$ is precompact.
\end{itemize}
\end{lemma}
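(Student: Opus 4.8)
The plan is to verify the two properties separately, treating $\varphi = \chi f_1 + (1-\chi)f_2$ as a family indexed by $(f_1,f_2) \in \mathcal{W}_1 \times \mathcal{W}_2$ subject to $\|f_1-f_2\|_{W^{1,p}(\Omega_1\cap\Omega_2)} < \delta$. For part $i)$, I would start from the pointwise identity
$$\mathcal{F}(\varphi) = \varphi_{\bar\zeta} + A(\varphi)\overline{\varphi_\zeta},$$
and observe that on a neighborhood of $\overline{\Omega_1\setminus\Omega_2}$ we have $\varphi = f_1$, so $\mathcal{F}(\varphi) = \mathcal{F}(f_1) = 0$ since $f_1 \in \mathcal{O}_J(\Omega_1)$; symmetrically $\mathcal{F}(\varphi)=0$ on a neighborhood of $\overline{\Omega_2\setminus\Omega_1}$. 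Hence $\mathcal{F}(\varphi)$ is supported in $\overline{\Omega_1\cap\Omega_2}$, which is exactly where the good-pair hypothesis lets $\chi$ transition. On that set I would write $\varphi - f_2 = \chi(f_1-f_2)$ and expand: using $\mathcal{F}(f_2)=0$,
$$\mathcal{F}(\varphi) = \chi_{\bar\zeta}(f_1-f_2) + \chi\,(f_1-f_2)_{\bar\zeta} + A(\varphi)\overline{\varphi_\zeta} - A(f_2)\overline{(f_2)_\zeta}.$$
The first term is bounded in $L^p$ by $\|\chi_{\bar\zeta}\|_\infty \|f_1-f_2\|_{L^p(\Omega_1\cap\Omega_2)}$; the second by $\|f_1-f_2\|_{W^{1,p}(\Omega_1\cap\Omega_2)}$; and the last pair I would control by splitting into $[A(\varphi)-A(f_2)]\overline{\varphi_\zeta}$ plus $A(f_2)\overline{(\varphi-f_2)_\zeta}$, where $A(f_2)$ is bounded (the maps in $\mathcal{W}_2$ have images in a compact set by the Sobolev embedding into $\mathcal{C}^0$, and $\mathcal{W}_2$ precompact gives a uniform $W^{1,p}$ bound on $\varphi_\zeta$), and $A(\varphi)-A(f_2)$ is controlled via the Lipschitz continuity of $A$ together with $\|\varphi - f_2\|_{\mathcal{C}^0} \le C\delta$. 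Collecting the constants — all of which depend only on $\chi$, on $A$ restricted to the relevant compact set, and on the uniform $W^{1,p}$ bounds for $\mathcal{W}_1,\mathcal{W}_2$ — yields the claimed linear estimate $\|\mathcal{F}(\varphi)\|_{L^p(\Omega_1\cup\Omega_2)} < C_0\delta$.

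For part $ii)$, the point is that $\mathcal{W}$ is the image of the precompact set $\mathcal{W}_1 \times \mathcal{W}_2 \subset W^{1,p}(\Omega_1)\times W^{1,p}(\Omega_2)$ under the continuous linear-affine map $(f_1,f_2) \mapsto \chi f_1 + (1-\chi)f_2$. I would make this precise by noting that multiplication by the fixed smooth function $\chi$ is bounded $W^{1,p}(\Omega_1\cup\Omega_2)\to W^{1,p}(\Omega_1\cup\Omega_2)$ (Leibniz rule, $\chi$ and $\nabla\chi$ bounded), and that restriction/extension using the support properties of $\chi$ and $1-\chi$ makes $\chi f_1$ well-defined on all of $\Omega_1\cup\Omega_2$ (it extends by $0$ past $\Omega_1$ near $\overline{\Omega_2\setminus\Omega_1}$, and $f_1$ is defined on all of $\Omega_1$ which covers the rest). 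Thus $\varphi$ depends continuously and linearly on $(f_1,f_2)$ in the relevant $W^{1,p}$ norms, and the continuous image of a precompact set is precompact.

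The main obstacle is the bookkeeping in part $i)$: one must be careful that $\mathcal{F}(\varphi)$ genuinely vanishes off $\overline{\Omega_1\cap\Omega_2}$ (this uses the good-pair condition $\overline{\Omega_1\setminus\Omega_2}\cap\overline{\Omega_2\setminus\Omega_1}=\emptyset$, so the two "$\varphi$ equals $f_j$" regions cover a neighborhood of the complement of the overlap), and that every constant appearing is uniform over the two families — in particular the bound on $\overline{\varphi_\zeta}$ in $L^p$ and the compactness of the image $f_2(\overline{\Omega_1\cap\Omega_2})$, both of which follow from $W^{1,p}$-precompactness combined with the Sobolev embedding $W^{1,p}\hookrightarrow\mathcal{C}^0$ for $p>2$. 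Once those uniformities are in hand the estimate is a routine application of the triangle inequality and the Lipschitz property of $A$.
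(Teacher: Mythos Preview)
Your proposal is correct and follows essentially the same strategy as the paper: observe that $\mathcal{F}(\varphi)$ is supported in $\Omega_1\cap\Omega_2$, expand it there via the Leibniz rule and the $J$-holomorphicity of $f_1,f_2$, and bound each piece using the uniform $W^{1,p}$-bounds, the Sobolev embedding $W^{1,p}\hookrightarrow\mathcal{C}^0$, and the Lipschitz property of $A$; part $ii)$ is exactly the ``continuous image of precompact is precompact'' argument the paper also uses. The only cosmetic difference is that the paper decomposes $\mathcal{F}(\varphi)=I+II$ symmetrically in $f_1$ and $f_2$ (so that $II=\chi[A(\varphi)-A(f_1)]\overline{(f_1)_\zeta}+(1-\chi)[A(\varphi)-A(f_2)]\overline{(f_2)_\zeta}$), whereas you subtract only $\mathcal{F}(f_2)=0$ and handle the resulting asymmetric remainder; both decompositions lead to the same $C_0\delta$ bound.
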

\begin{proof}
Given $\varphi\in\mathcal{W}$ the map $\mathcal{F}(\varphi)$ vanishes everywhere except on the intersection $\Omega_1\cap \Omega_2$. Moreover, on that set we have $\mathcal{F}(\varphi)=I+II$ where
$$I=\chi_{\bar{\zeta}}(f_1-f_2)+A(\varphi)\overline{\chi_\zeta(f_1-f_2)},$$
$$II=\chi\left((f_1)_{\bar{\zeta}}+A(\varphi)\overline{(f_1)_\zeta}\right)+(1-\chi)\left((f_2)_{\bar{\zeta}}+A(\varphi)\overline{(f_2)_\zeta}\right).$$
Since $\mathcal{W}_1$ and $\mathcal{W}_2$ are $W^{1,p}$-bounded, $\chi$ is fixed and $p>2$, the $L^\infty$-norm of all possible pregluing maps $\varphi\in\mathcal{W}$ is uniformly bounded. This implies that on the set $\Omega_1\cap\Omega_2$ we have
$$\left\|I\right\|_{L^p}\leq\left(\left\|\chi_{\bar{\zeta}}\right\|_{L^\infty}+\left\|A(\varphi)\right\|_{L^\infty}\left\|\chi_{\zeta}\right\|_{L^\infty}\right)\delta=C_1\delta,$$
where the constant $C_1>0$ depends on the structure $J$, the pair $(\Omega_1,\Omega_2)$ and the sets $\mathcal{W}_1$ and $\mathcal{W}_2$.
Similarly, we have
$$\left\|II\right\|_{L^p}\leq \left\|A(\varphi)-A(f_1)\right\|_{L^\infty}\left\|(f_1)_\zeta\right\|_{L^p}+\left\|A(\varphi)-A(f_2)\right\|_{L^\infty}\left\|(f_2)_\zeta\right\|_{L^p}<C_2\delta.$$
Hence, we can set $C_0=C_1+C_2$ to prove $i)$. The point $ii)$ follows simply from the fact that the sets $\mathcal{W}_1$ and $\mathcal{W}_2$ are precompact in the $W^{1,p}$-topology.  
\end{proof}

\noindent After defining all possible pregluing maps, we proceed to the second step. That is, we show that Theorem \ref{sest} can be (uniformly) applied on $\mathcal{W}$. Note that there is $C>0$ such that for every $\varphi\in\mathcal{W}$ we have 
$$\left\|d\varphi\right\|_{L^{p}(\Omega_1\cup\Omega_2)}<C\;\;\textrm{and}\;\; \left\|Q_\varphi\right\|_{op}<C.$$ 
The first condition is provided by the fact that $\mathcal{W}$ is precompact, the second one is guaranteed by Proposition \ref{pet}. 
Finally, let $\delta_C>0$ be as in Theorem \ref{sest} and let $C_0>0$ be the constant from part $i)$ of Lemma \ref{sedem}. Assume that
$$\delta<\min\left\{\frac{\delta_C}{C_0}\frac{\epsilon}{4C},\frac{\epsilon}{2\left\|\chi\right\|_{L^\infty} }\right\}.$$ Then by Theorem \ref{sest} every $\varphi\in \mathcal{W}$ admits a $J$-holomorphic approximation $\widehat{f}$. Moreover, these are the maps we seek since for $j=1,2$, we have
$$\left\|\widehat{f}-f_j\right\|_{W^{1,p}(\Omega_j)}\leq \left\|\widehat{f}-\varphi\right\|_{W^{1,p}(\Omega_j)}+\left\|\varphi-f_j\right\|_{W^{1,p}(\Omega_j)}<\epsilon.$$
This completes the proof.
\end{proof}


\section{The non-linear problem}        
This section is devoted to the proof of Theorem \ref{dva}. Again, we rely strongly on \cite{BERKUZ}, but due to the nature of the problem we have to work in two local charts at the same time. In particular, we will apply our Newton-type iteration to the map $\mathcal{F} \colon W^{1,p}(\Omega_1)\times W^{1,p}(\Omega_2)\to L^{p}(\Omega_1)\times L^p(\Omega_2),$ given by 
$$\mathcal{F}(\varphi_1,\varphi_2)=(\mathcal{F}_1(\varphi_1),\mathcal{F}_2(\varphi_2)),$$
where $\mathcal{F}_1$ and $\mathcal{F}_2$ are the $J$-holomorphicity operators defined as in (\ref{A}) for $J_1$ and $J_2$ (we take the $L^1$-norm on the direct sum of Banach spaces). Our goal will be to find an appropriate right inverse for its linearization so that the produced limit will be an element of the set  
$$\mathcal{M}=\left\{(\varphi_1,\varphi_2)\in W^{1,p}(\Omega_1)\times W^{1,p}(\Omega_2);\;\; \Psi(\varphi_2)=\varphi_1\;\;\textrm{on}\;\; \Omega_1\cap\Omega_2 \right\}.$$
As in the local case, we set
$$\left\|f_1-\Psi(f_2)\right\|_{W^{1,p}(\Omega_1\cap\Omega_2)}<\delta\leq 1 $$
and seek an appropriate bound for $\delta>0$. However, this time we do not state it explicity. We only show that Theorem \ref{dva} holds if $\delta$ is 'small enough'.
 
Let $f_2\in\mathcal{W}_2$ be arbitrary. Recall that $\chi$ is a cut-off function corresponding to the pair $(\Omega_1,\Omega_2)$. For small $\delta$ we can define $\varphi_1\colon\Omega_1\to \R^{2n}$ given by   
\begin{eqnarray}\label{fi1}\varphi_1=\chi f_1+(1-\chi)\Psi(f_2).\end{eqnarray}
Furthermore, we can set $\varphi_2=\Psi^{-1}(\varphi_1)$ on $\Omega_1\cap\Omega_2$ and then extend this map to the whole $\Omega_2$ by taking $\varphi_2=f_2$ on $\Omega_2\setminus \Omega_1$. Note that $(\varphi_1,\varphi_2)\in\mathcal{M}.$ 
Moreover, it follows directly from part $i)$ in Lemma \ref{sedem} that there is $C_0>0$ (depending also on the diffeomorphism $\Psi$) such that
$$\left\|\mathcal{F}(\varphi_1,\varphi_2)\right\|_{W^{1,p}(\Omega_1)\times W^{1,p}(\Omega_2)}<C_0\cdot\delta.$$
Thus, the pair $(\varphi_1,\varphi_2)$ is a natural candidate for the pregluing map in our construction. Furthermore, since $f_1$ is fixed and $f_2\in\mathcal{W}_2$ lies in a precompact family, the $W^{1,p}$-norm of $\varphi_1$ and $\varphi_2$ can be uniformly bounded. Therefore, a statement similar to part $ii)$ in Proposition \ref{stiri} can be proved.  

Hence, the only missing step in our gluing construction is to find a family of appropriate and uniformly bounded right inverses for $d_{(\varphi_1,\varphi_2)}\mathcal{F}$. Indeed, after that a statement similar to Theorem \ref{sest} can be proved and the desired holomorphic pair $(\widehat{f}_1,\widehat{f}_2)$ can be provided. We leave these details to the reader and focus solely on the construction of appropriate inverses. It turns out that, in order to find them, we have to slightly change the idea from (\ref{osnovni}). Therefore, we begin by explaining our new approach in this simplest case.   
\vskip 0.2 cm
\noindent \textbf{New approach for $\Psi=id$ and $J\equiv J_{st}$}.
\vskip 0.1 cm
\noindent In the integrable case $\bar{\partial}_{J_{st}}$ is linear and on $\Omega_1\cup\Omega_2$ its right inverse can be chosen to be $T_{\Omega_1\cup\Omega_2}$. Hence, at the first sight, a natural candidate for the inverse of $d_{(\varphi_1,\varphi_2)}\mathcal{F}=(d_{\varphi_1}\mathcal{F}_1,d_{\varphi_2}\mathcal{F}_2)$ seems to be $(T_{\Omega_1},T_{\Omega_2})$. However, this is not the case since, in general, we have $T_{\Omega_1}(V)\neq T_{\Omega_2}(V)$ on $\Omega_1\cap\Omega_2$. That is, such an operator does not provide variations lying in the tangent space $T_{(\varphi_1,\varphi_2)}\mathcal{M}$. Hence, we seek its replacement that can be computed with values from $\Omega_1$ and $\Omega_2$ separately and still maps into $W^{1,p}(\Omega_1\cup\Omega_2)$. 

We define the following two compact sets
\begin{eqnarray}\label{kompakta}K_1=\overline{\Omega_1\cap\Omega_2}\cap\left\{\chi\leq\frac{1}{3}\right\},\;\; K_2=\overline{\Omega_1\cap\Omega_2}\cap\left\{\chi\geq\frac{2}{3}\right\}.\end{eqnarray}
Given $\gamma>0$ let $P_\gamma$ be a complex polynomial such that 
$$\left\|P_\gamma\right\|_{L^{\infty}(K_1)}<\gamma\;\;\textrm{and}\;\; \left\|P_\gamma-1\right\|_{L^{\infty}(K_2)}<\gamma.$$
Let $\beta_1$ and $\beta_2$ be two smooth cut-off functions defined on $\Omega_1\cap\Omega_2$ and and having the following properties:
$\beta_1(\zeta)\equiv 0$ where $\chi(\zeta)<\frac{1}{9}$; $\beta_1(\zeta)\equiv 1$ where $\chi(\zeta)>\frac{2}{9}$; $\beta_2(\zeta)\equiv 0$ where $\chi(\zeta)<\frac{7}{9}$; $\beta_2(\zeta)\equiv 0$ where $\chi(\zeta)>\frac{8}{9}$. 

Given $V\in L^{p}(\Omega_1\cup \Omega_2)$ we denote by $V_1$ and $V_2$ its restrictions to $\Omega_1$ and $\Omega_2$, respectively. Furthermore, in what is written below we treat $Q_j=T_{\Omega_j}$, $j=1,2$, as a bounded map from $L^p(\Omega_j)$ to $W^{1,p}(\Omega_j)$ although the classical Cauchy-Green operator extends continuously to the whole $\Omega_1\cup\Omega_2$. Note that the map 
$$W(V)=Q_2(V_2)-Q_1(V_1)$$
is well defined and satisfies $\bar{\partial}_{J_{st}}W(V)=0$ on $\Omega_1\cap\Omega_2$.   
Hence, we can define the operator $\widehat{Q}\colon L^{p}(\Omega)\to W^{1,p}(\Omega)$ given by
$$\widehat{Q}(V)=(1-\beta_2)\left(Q_1(V_1)+\beta_1 P_\gamma W(V)\right)+\beta_2Q_2(V_2).$$
A straightforward computation shows that
$$\bar{\partial}_{J_{st}}\widehat{Q}(V)=V+(\beta_1)_{\bar{\zeta}}P_\gamma W(V)+(\beta_2)_{\bar{\zeta}}(1-P_{\gamma})W(V).$$
Hence,
$$\left\|\bar{\partial}_{J_{st}}\widehat{Q}(V)-V\right\|_{L^p(\Omega_1\cup\Omega_2)}\leq C\cdot\gamma \left\|V\right\|_{L^p(\Omega_1\cup\Omega_2)}$$
where $C>0$ depends on the operator bounds for $T_{\Omega_j}$ and the derivatives of $\beta_1$ and $\beta_2$. Thus, provided that $\gamma\leq \frac{1}{2C}$ a new right inverse for $\bar{\partial}_{J_{st}}$ on $\Omega_1\cup\Omega_2$ can be defined by
$Q=\widehat{Q}\left(\bar{\partial}_{J_{st}}\circ\widehat{Q}\right)^{-1}.$ Moreover, note that its norm is bounded by twice the norm of $\widehat{Q}$. That is, the $W^{1,p}$-size of $P_\gamma$, $\beta_1$, $\beta_2$, and the operator norms of $Q_1$ and $Q_2$. \qed

\vskip 0.2 cm
In what follows we would like to extend this construction to the case of non-integrable stuctures. Recall that
$$d_{\varphi_1}\mathcal{F}_1(V)=V_{\bar{\zeta}}+A(\varphi_1)\overline{V_\zeta}+B_1^{\varphi_1} V+B_2^{\varphi_1} \overline{V}.$$ 
However, since $f_1$ is a fixed embedding, we can assume that, after a change of coordinates, $J_1(f_1)=J_{st}$ and $A_1(f_1)=0$, see e.g. \cite[Appendix A.2.]{IR}. That is, for all possible $\varphi_1$ the matrix function $A(\varphi_1)$ can be assumed to be $\delta$-close to the zero matrix. Therefore, it is enough to work with the so-called generalized analytic vectors $\mathcal{O}_{B_1,B_2}(\Omega)\subset W^{1,p}(\Omega)$ satisfying the equation
$$\bar{\partial}_{B_1,B_2}V= V_{\bar{\zeta}}+B_1 V+B_2 \overline{V}=0,$$
where $B_1$ and $B_2$ are matrix functions with coefficients of class $L^p(\Omega)$. 
Precisely, in order to replace the complex polynomial $P_\gamma$ used in the new approach above, we apply the following Runge-type theorem which is due to Goldschmidt \cite[Theorem 3.1]{Goldschmidt}.

\begin{theorem}\label{osem} Let $\Omega\subset \C$ be bounded and let $B_1$, $B_2$ be matrix functions with coefficients in $L^p(\Omega)$. Let $\Omega_0\subset\Omega$ be a domain whose complement admits no relatively compact connected components. Given $V_0\in\mathcal{O}_{B_1,B_2}(\Omega_0)$, $\epsilon>0$ and compact set $K\subset \Omega_0$ there is $V\in\mathcal{O}_{B_1,B_2}(\Omega)$ such that
$\left\|V_0-V\right\|_{L^\infty(K)}<\epsilon.$
\end{theorem}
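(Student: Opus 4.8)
The plan is to reduce the statement to the classical Runge theorem by means of a global similarity principle for the elliptic system $LV:=V_{\bar\zeta}+B_1V+B_2\overline V$. First I would enlarge the ambient set: choose an open disc $\Delta$ with $\overline\Omega\subset\Delta$ and extend $B_1,B_2$ to matrix functions on $\Delta$ with $L^p(\Delta)$ coefficients, say by zero on $\Delta\setminus\Omega$. Since a solution of $LV=0$ on $\Delta$ restricts to an element of $\mathcal{O}_{B_1,B_2}(\Omega)$, and since $\C\setminus\Omega_0$ still has only unbounded components (so $\Delta\setminus\Omega_0$, being $(\C\setminus\Omega_0)\cap\Delta$ with $\partial\Delta\subset\C\setminus\Omega$, has no component relatively compact in $\Delta$), it suffices to approximate $V_0$ on $K$ by solutions of $LV=0$ on $\Delta$.

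The heart of the matter is the following claim: there is a matrix function $\Phi\in W^{1,p}(\Delta)\cap\mathcal{C}(\overline\Delta)$ with values in $\mathrm{GL}_n(\C)$, with $\Phi^{-1}\in\mathcal{C}(\overline\Delta)$, such that for every subdomain $\Delta_0\subset\Delta$ the assignment $V\mapsto\Phi^{-1}V$ is a bijection of $\mathcal{O}_{B_1,B_2}(\Delta_0)$ onto the space of $\C^n$-valued holomorphic maps on $\Delta_0$. This is the Vekua similarity principle adapted to systems: one looks for $\Phi$ as the fixed point of a nonlinear integral equation of the type $\Phi=I-T_\Delta\!\big(B_1\Phi+B_2\overline\Phi\,\overline\Phi^{-1}\Phi\big)$ (so that $(\Phi^{-1}V)_{\bar\zeta}=0$ whenever $LV=0$), using that the Cauchy--Green operator $T_\Delta$ from \eqref{CG} is compact from $L^p(\Delta)$ into $W^{1,p}(\Delta)\hookrightarrow\mathcal{C}(\overline\Delta)$ because $p>2$; a Schauder-type fixed-point argument on a sufficiently small disc, followed by a covering/continuation step, produces $\Phi$, and elliptic regularity makes it continuous up to $\overline\Delta$. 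I expect this step --- obtaining a \emph{single, globally defined, invertible} similarity matrix for a genuine system rather than a scalar equation, i.e.\ controlling $\det\Phi\neq0$ --- to be the main obstacle; for $n=1$ it is classical, while for systems it is exactly the content of the cited theorem and seems to require either the two-dimensional argument of Bojarski or a degree/homotopy argument along a subdivision of $\Delta$. Alternatively one can bypass $\Phi$ and argue by Hahn--Banach: a functional $\mu$ on $\mathcal{C}(K,\C^n)$ annihilating $\mathcal{O}_{B_1,B_2}(\Omega)$ has a generalized Cauchy transform built from Vekua's resolvent kernels of the formal adjoint of $L$, which solves the adjoint system on $\C\setminus K$, decays at infinity, and vanishes on $\C\setminus\Omega$ (there the kernels $\zeta\mapsto$ kernel$(z,\zeta)$ are $L$-solutions on $\Omega$); unique continuation for elliptic Vekua systems together with the connectedness of $\C\setminus\Omega_0$ then forces it to vanish on $\C\setminus\overline{\Omega_0}$, and the generalized Cauchy--Pompeiu formula shows $\mu$ annihilates $V_0$. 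That route trades the global similarity principle for the generalized Cauchy kernel and a unique-continuation input.

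Granting the claim, the argument finishes quickly. Set $h_0:=\Phi^{-1}V_0$, holomorphic and $\C^n$-valued on $\Omega_0$. Because $\C\setminus\Omega_0$ has no relatively compact component, $\Delta\setminus\Omega_0$ has no component relatively compact in $\Delta$, so the classical Runge theorem yields $\C^n$-valued maps $p_j$, holomorphic on $\Delta$, with $\|h_0-p_j\|_{L^\infty(K)}\to0$. Then $V_j:=\Phi\,p_j\in\mathcal{O}_{B_1,B_2}(\Delta)$ and
$$\|V_0-V_j\|_{L^\infty(K)}=\|\Phi(h_0-p_j)\|_{L^\infty(K)}\le\|\Phi\|_{L^\infty(K)}\,\|h_0-p_j\|_{L^\infty(K)}\longrightarrow0.$$
Restricting $V_j$ to $\Omega$ and taking $j$ large gives a map $V\in\mathcal{O}_{B_1,B_2}(\Omega)$ with $\|V_0-V\|_{L^\infty(K)}<\epsilon$, as desired.
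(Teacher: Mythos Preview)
The paper does not prove this theorem; it is quoted from Goldschmidt \cite{Goldschmidt} and only a remark on regularity is added. So there is no ``paper's own proof'' to compare to, but your primary line of attack has a real gap that you should be aware of.

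A single invertible matrix $\Phi$ with the property that $V\mapsto\Phi^{-1}V$ takes \emph{all} solutions of $V_{\bar\zeta}+B_1V+B_2\overline V=0$ to holomorphic maps cannot exist when $B_2\not\equiv0$. The operator $L$ is only $\R$-linear, while left multiplication by $\Phi^{-1}$ is $\C$-linear; a $\C$-linear conjugation cannot turn an $\R$-linear operator into the $\C$-linear $\partial_{\bar\zeta}$. Concretely, if $V=\Phi h$ with $h$ holomorphic, then $LV=(\Phi_{\bar\zeta}+B_1\Phi)h+B_2\overline{\Phi}\,\overline{h}$, and the second term does not vanish for all holomorphic $h$ unless $B_2\overline{\Phi}=0$. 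Your proposed integral equation reflects this: the factor $\overline{\Phi}\,\overline{\Phi}^{-1}$ is the identity, so the equation collapses to $\Phi_{\bar\zeta}+(B_1+B_2)\Phi=0$, and the resulting $\Phi$ does not do what you want. Even in the scalar Bers--Vekua similarity principle the exponent depends on the particular solution through $\overline V/V$; there is no universal factor. Doubling the system to a $\C$-linear one for $(V,\overline V)$ gives a genuine similarity factor, but then Runge approximation destroys the reality constraint $\text{second block}=\overline{\text{first block}}$, so that route does not close either.

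Your alternative, the Hahn--Banach/duality argument with generalized Cauchy kernels for $L$ and its adjoint together with unique continuation, is the correct strategy and is precisely how Goldschmidt proceeds. If you want a self-contained proof, flesh out that branch: construct the resolvent kernels (fundamental solutions) of $L$ and $L^{*}$ with the right decay, form the generalized Cauchy transform of an annihilating measure, and use the topological hypothesis on $\Omega_0$ plus unique continuation to conclude. The similarity-principle shortcut, as stated, does not go through.
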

\noindent\textbf{Remark.} The reader should note that in the original reference, this theorem is stated in terms of weak solutions of $\overline{\partial}_{B_1,B_2}V=0$ belonging to $L_{loc}^q(\Omega)$ and $L_{loc}^q(\Omega_0)$, where $\frac{1}{p}+\frac{1}{q}=1$. However, the present version can be established via standard bootstrapping arguments (check e.g. \cite[Section III/3]{VEKUA}). 
\vskip 0.1 cm
We use this statement in order to prove the following crucial lemma.
\begin{lemma}\label{devet}
Let $\gamma>0$ and let $\varphi_1$, $K_1$, $K_2$ be defined as in (\ref{fi1}) and (\ref{kompakta}). There is $C_\gamma>0$ such that for every $W\in W^{1,p}(\Omega_1\cap\Omega_2)$ satisfying $d_{\varphi_1}\mathcal{F}_1(W)=0$ there exists a matrix function $P_\gamma$ satisfying:  
\begin{itemize}
\item[i)] $\left\|P_\gamma\right\|_{L^\infty(K_1)}<\gamma$ and $\left\|P_\gamma-Id\right\|_{L^{\infty}(K_2)}<\gamma.$
\item[ii)] $\left\|d_{\varphi_1}\mathcal{F}_1\left(P_\gamma W\right) \right\|_{W^{1,p}(\Omega_1\cap\Omega_2)}<(\gamma+C_\gamma\delta) \cdot\left\|W\right\|_{W^{1,p}(\Omega_1\cap\Omega_2)}.$ 
\item[iii)] The $W^{1,p}(\Omega_1\cap\Omega_2)$-norm of the coefficients of $P_\gamma$ is bounded by $C_\gamma$. 
\end{itemize}
\end{lemma}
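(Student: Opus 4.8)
The idea is to carry out, in the non-integrable case, exactly the ``new approach'' described above, with the matrix $P_\gamma$ playing the role of the holomorphic polynomial and Goldschmidt's Runge-type Theorem~\ref{osem} replacing the classical Runge theorem. Normalise $\|W\|_{W^{1,p}(\Omega_1\cap\Omega_2)}=1$ and write $A=A(\varphi_1)$, $B_j=B_j^{\varphi_1}$. Since $f_1$ is an embedding normalised so that $A(f_1)\equiv 0$ and $\varphi_1$ is $\delta$-close to $f_1$ on $\overline{\Omega_1\cap\Omega_2}$, we have $\|A\|_{L^\infty}\le C\delta$, and for every $V$,
$$d_{\varphi_1}\mathcal{F}_1(V)=\bar{\partial}_{B_1,B_2}V+A\,\overline{V_\zeta}.$$
In particular $d_{\varphi_1}\mathcal{F}_1(W)=0$ gives $\|\bar{\partial}_{B_1,B_2}W\|_{L^p}\le C\delta$, so with $T_{B_1,B_2}$ a bounded right inverse of $\bar{\partial}_{B_1,B_2}$ on $\Omega_1\cap\Omega_2$ the map $W^0:=W-T_{B_1,B_2}\big(\bar{\partial}_{B_1,B_2}W\big)$ is an exact solution, $W^0\in\mathcal{O}_{B_1,B_2}(\Omega_1\cap\Omega_2)$, with $\|W-W^0\|_{W^{1,p}}\le C\delta$. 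If we can produce a matrix function $P_\gamma$ with $\|P_\gamma\|_{W^{1,p}}\le C_\gamma$, with the boundary behaviour required in i), and such that $P_\gamma W^0\in\mathcal{O}_{B_1,B_2}(\Omega_1\cap\Omega_2)$, then $\bar{\partial}_{B_1,B_2}(P_\gamma W)=\bar{\partial}_{B_1,B_2}\big(P_\gamma(W-W^0)\big)$, and together with the displayed formula and $\|A\|_{L^\infty}\le C\delta$ this bounds $d_{\varphi_1}\mathcal{F}_1(P_\gamma W)$ by $C_\gamma\delta$, which is ii). So everything is reduced to producing such a $P_\gamma$.

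Here I would imitate the integrable model. The good-pair hypothesis makes $\overline{\Omega_1\setminus\Omega_2}$ and $\overline{\Omega_2\setminus\Omega_1}$ disjoint, so $K_1$ and $K_2$ accumulate on disjoint arcs of $\partial(\Omega_1\cap\Omega_2)$; hence $K_1\cup K_2$ has connected complement and no relatively compact set separates it from that boundary. Extending $B_1,B_2$ by $0$ across $\partial(\Omega_1\cap\Omega_2)$, the function equal to $W^0$ on a small neighbourhood of $K_2$ and to $0$ on a small disjoint neighbourhood of $K_1$ is generalized analytic there, so by Theorem~\ref{osem} in its compact-set form (which follows from the stated version by exhaustion, cf.\ \cite[Section III/3]{VEKUA}) there is $Z^0\in\mathcal{O}_{B_1,B_2}(\Omega_1\cap\Omega_2)$ with $\|Z^0-W^0\|_{L^\infty(K_2)}$ and $\|Z^0\|_{L^\infty(K_1)}$ as small as desired and, by the interior elliptic estimates for $\bar{\partial}_{B_1,B_2}$, with $\|Z^0\|_{W^{1,p}}\le C_\gamma$. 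Now fix a smooth cut-off $\mu$ equal to $1$ near $K_2$ and to $0$ near $K_1$, and a smooth vector field $u$ with $\langle W^0,u\rangle\equiv 1$, and set
$$P_\gamma:=\mu\,Id+(Z^0-\mu W^0)\otimes u^{*}.$$
Then $P_\gamma W^0=Z^0\in\mathcal{O}_{B_1,B_2}$ identically; on $K_1$ (where $\mu=0$) one has $P_\gamma=Z^0\otimes u^{*}$, which is $L^\infty$-small; on $K_2$ (where $\mu=1$) one has $P_\gamma-Id=(Z^0-W^0)\otimes u^{*}$, which is $L^\infty$-small; and $\|P_\gamma\|_{W^{1,p}}$ is controlled by $\|Z^0\|_{W^{1,p}}$, $\|W^0\|_{W^{1,p}}$ and the fixed data $\mu,u$. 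This gives i)--iii).

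The main obstacle is this last construction. First, one must justify applying Theorem~\ref{osem} in the present ``two-ended'' configuration, where $K_1,K_2$ reach the boundary of $\Omega_1\cap\Omega_2$ and the model datum lives on a disconnected set; this is exactly where the good-pair geometry is used, and the passage from the domain version to the compact-set version has to be handled carefully. Second, and more seriously, keeping $\|P_\gamma\|_{W^{1,p}}\le C_\gamma$ with $C_\gamma$ independent of $W$ forces the auxiliary field $u$, hence $W^0$, to stay bounded away from $0$: when $n\ge 2$ this is automatic, since a generalized analytic $\C^n$-valued map is then generically nowhere zero, and in general it is arranged by first perturbing $W$ inside $\mathcal{O}_{B_1,B_2}(\Omega_1\cap\Omega_2)$ by a small multiple of a nowhere vanishing generating solution, at the cost of only a further $O(\delta)$, respectively $O(\gamma)$, error in i)--iii) (which is precisely where the term $\gamma$ in ii) is absorbed).
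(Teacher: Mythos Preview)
Your overall architecture---normalise $\|W\|=1$, use $\|A(\varphi_1)\|_{L^\infty}\le C\delta$ to reduce to the generalised analytic operator $\bar\partial_{B_1,B_2}$, and then invoke Goldschmidt's Runge theorem---matches the paper.  But the construction of $P_\gamma$ itself takes a different route from the paper, and the route you chose has a genuine gap at exactly the point you flagged.

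The paper does \emph{not} first Runge--approximate $W^0$ and then build $P_\gamma$ as a rank-one correction $\mu\,Id+(Z^0-\mu W^0)\otimes u^{*}$.  Instead it observes that the condition $\bar\partial_{B_1,B_2}(P_\gamma W)=0$ can be rewritten (using the diagonal matrix $D$ with entries $\overline{W}_j/W_j$) as a generalised analytic system for the $n^2$-vector $P_\gamma$ itself, whose coefficients have modulus at most~$1$ and hence lie in $L^\infty\subset L^p$ regardless of where $W$ vanishes.  On a neighbourhood $\Omega_0$ of $K_1\cup K_2$ this system has the obvious solutions $P_\gamma=0$ and $P_\gamma=Id$, and Theorem~\ref{osem} (with $B_1,B_2,D$ extended by zero outside $\overline{\Omega_1\cap\Omega_2}$) then produces the global $P_\gamma$ directly.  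This sidesteps entirely the need for $W^0$ to be bounded away from zero, which is where your construction stalls: your claim that for $n\ge2$ a generalised analytic vector is ``automatically'' nowhere zero is false for a \emph{given} $W^0$, and the proposed perturbation by a ``nowhere vanishing generating solution'' neither guarantees non-vanishing of the sum nor gives a uniform lower bound; without that, the dual field $u$ and hence $\|P_\gamma\|_{W^{1,p}}$ are uncontrolled.  Your bound $\|Z^0\|_{W^{1,p}}\le C_\gamma$ ``by interior elliptic estimates'' is also unjustified: Runge approximation controls $Z^0$ only on $K_1\cup K_2$, not on all of $\Omega_1\cap\Omega_2$.

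For the uniformity in iii) the paper uses a further idea you do not have: the generalised Cauchy formula plus a Montel argument show that the set of $W^{1,p}$-unitary solutions of $\bar\partial_{B_1,B_2}W=0$ is \emph{precompact} in $W^{1,p}_{\mathrm{loc}}(\Omega_1\cap\Omega_2)$.  One then checks that a single $P_\gamma$ built for $W$ still satisfies ii) (with a $\gamma/2$ error) for every $\widehat W$ in a small $W^{1,p}_{\mathrm{loc}}$-neighbourhood of $W$; compactness reduces to finitely many such $P_\gamma$, and $C_\gamma$ is the maximum of their $W^{1,p}$-norms.  This is where the $\gamma$ term in ii) actually originates.
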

\begin{proof} First note that one can take the same matrix $P_\gamma$ for every $\lambda W$ where $\lambda\in\mathbb{C}$ and $d_{\varphi_1}\mathcal{F}_1(W)=0$. Hence, it suffices to prove this statement only for $W^{1,p}$-unitary solutions of this equation, that is, for $\left\|W\right\|_{W^{1,p}(\Omega_1\cap\Omega_2)}=1.$ 

We begin with the case when $A(\varphi_1)=0$ and $d_{\varphi_1}\mathcal{F}_1(W)=\bar{\partial}_{B_1^{\varphi_1},B_2^{\varphi_1}}W=0$. We would like to find $P_\gamma$ such that $\bar{\partial}_{B_1^{\varphi_1},B_2^{\varphi_1}}\left(P_\gamma W\right)=0$. Equivalently,
$$\left(P_\gamma\right)_{\bar{\zeta}}W+\left(B^{\varphi_1}_1P_\gamma-P_\gamma B^{\varphi_1}_1\right)W+\left(B^{\varphi_1}_2\overline{P_\gamma}-P_\gamma B^{\varphi_1}_2\right)\overline{W}=0.$$
Let $D$ be a diagonal matrix with entries $\frac{\overline{W}_j}{W_j}.$ Then a sufficient condition for the above equation to hold is
$$\left(P_\gamma\right)_{\bar{\zeta}}+\left(B^{\varphi_1}_1P_\gamma-P_\gamma B^{\varphi_1}_1-P_\gamma B^{\varphi_1}_2D\right)+B^{\varphi_1}_2\overline{P_\gamma}D=0.$$ 
However, this matrix equation can be seen as a generalized analytic equation for a $n^2$-dimensional vector $P_\gamma$. Therefore, one can apply the Runge theorem. Indeed, Theorem \ref{osem} allows $L^p$-regular coefficients. Hence $B^{\varphi_1}_1, B^{\varphi_1}_2$ and $D$ can be extended as zero matrix functions to some neighorhood $\Omega$ of $\overline{\Omega_1\cap\Omega_2}$. Moreover, note that $P_\gamma=0$ and $P_\gamma=Id$ solve such an equation on some small neighborhood $\Omega_0$ of $K_1\cup K_2$. Thus there is a generalized analytic solution $P_\gamma$ satisfying $\bar{\partial}_{B_1^{\varphi_1},B_2^{\varphi_1}}\left(P_\gamma W\right)=0$ and the property $i)$.  

Of course, by the construction, such a matrix function $P_\gamma$ depends on a concrete solution of $\bar{\partial}_{B_1^{\varphi_1},B_2^{\varphi_1}}W=0$. Therefore, we can not yet expect to have a uniform bound needed in $iii)$. However, for $ii)$ we do not need the vanishing of $\bar{\partial}_{B^{\varphi_1}_1,B^{\varphi_1}_2}(P_\gamma W)$, we just seek an appropriate $W^{1,p}$-bound for this expression. Hence, a locally constant selection of $P_\gamma$ can be made. Indeed, given $W\in\mathcal{O}_{B_1^{\varphi_1},B_2^{\varphi_1}}(\Omega_1\cap\Omega_2)$ the generalized Cauchy integral formula yields the following representation on every bounded domain $\mathcal{D}\subseteq\Omega_1\cap\Omega_2$ with a $\mathcal{C}^1$ boundary:
$$W(\zeta)=-T_{\mathcal{D}}\left(B^{\varphi_1}_1W+B^{\varphi_1}_2\overline{W}\right)(\zeta)+\frac{1}{2\pi i}\int_{\partial \mathcal{D}}\frac{f(z)}{z-\zeta}d\zeta.$$
Hence, provided that $\left\|W\right\|_{W^{1,p}(\Omega_1\cap\Omega_2)}=1$ the first sumand is uniformly $W^{2,p}$-bounded. Moreover, by Sobolev embedding theorem all such solutions admit a uniform $\mathcal{C}^0$-bound on $\partial \mathcal{D}$. This together with the classical Montel argument implies that the subset of generalized analytic vectors admiting a unitary $W^{1,p}$-norm is precompact in the space $W^{1,p}_{loc}(\Omega_1\cap\Omega_2)$. 

Let now $W$ be such a $W^{1,p}$-unitary solution and $P_\gamma$ the corresponding matrix function for which  $\bar{\partial}_{B_1^{\varphi_1},B_2^{\varphi_1}}\left(P_\gamma W\right)=0$. Then for any other $W^{1,p}$-unitary solution $\widehat{W}$ we have
$$\bar{\partial}_{B_1^{\varphi_1},B_2^{\varphi_1}}\left(P_\gamma \widehat{W}\right)=P_\gamma\left(\widehat{W}-W\right)_{\bar{\zeta}}+\left(\left(P_\gamma\right)_{\bar{\zeta}}+B_1P_{\gamma}+B_2\overline{P_{\gamma}}\right)\left(\widehat{W}-W\right).$$
Hence, there exists a compact set $K_W\subset\Omega_1\cap\Omega_2$ large enough so that
$$\left\|\bar{\partial}_{B_1^{\varphi_1},B_2^{\varphi_1}}\left(P_\gamma \widehat{W}\right)\right\|_{L^p(\Omega_1\cap\Omega_2\setminus K_W)}<\frac{\gamma}{2}.$$
Moreover, there is $\delta_W>0$ such that $\left\|W-\widehat{W}\right\|_{W^{1,p}_{loc}(\Omega_1\cap\Omega_2)}<\delta_W$ implies
$$\left\|\bar{\partial}_{B_1^{\varphi_1},B_2^{\varphi_1}}\left(P_\gamma \widehat{W}\right)\right\|_{L^p(K_W)}<\frac{\gamma}{2}.$$
Hence, a $W^{1,p}$-unitary solution $\widehat{W}$ that is $\delta_W$-close to $W$ in $W^{1,p}_{loc}(\Omega_1\cap\Omega_2)$ satisfies $ii)$ with $\delta=0$ if the matrix $P_\gamma$ satisfies $\bar{\partial}_{B_1^{\varphi_1},B_2^{\varphi_1}}\left(P_\gamma W\right)=0 $. This together with the above explained precompactness of such solutions in $W^{1,p}_{loc}(\Omega_1\cap\Omega_2)$ gives the desired conclusion for $A(\varphi_1)=0$.

Recall now that $\varphi_1=\chi f_1+(1-\chi)\Psi(f_2)$ where $A(f_1)$ was assumed to vanish and $f_2$ belongs to a precompact $W^{1,p}$-subset. Therefore, for all possible $\varphi_1$ we can assume that for some $C_1>0$ we have 
$$\left\|A(\varphi_1)\right\|_{L^\infty(\Omega_1)}<C_1\cdot \delta.$$ 
Further, let $d_{\varphi_1}\mathcal{F}_1(W)=0$ and $\left\|W\right\|_{W^{1,p}(\Omega_1\cap\Omega_2)}=1$. By \cite[Corollary 3.6]{ST} there is $S\in W^{1,p}(\Omega_1\cap\Omega_2)$ such that
$$S_{\bar{\zeta}}+B^{\varphi_1}_1S+B^{\varphi_1}_2\overline{S}=A(\varphi_1)\overline{W_{\zeta}}.$$
Moreover, provided that $\varphi_1$ belongs to a precompact set, we can use the same trick as in the proof of Proposition \ref{pet} in order to provide a uniform bound for the operator norms for right inverses of all possible $\bar{\partial}_{B_1^{\varphi_1},B_2^{\varphi_1}}$-operators. That is, there is $C_2>0$ such that
$$\left\|S\right\|_{W^{1,p}(\Omega_1\cap\Omega_2)}<C_2\cdot \delta.$$
Finally, note that $\bar{\partial}_{B_1^{\varphi_1},B_2^{\varphi_1}}(W+S)=0$. Hence, by what we have proved above, there exists a matrix function $P_\gamma$ such that 
$$\left\|\bar{\partial}_{B_1^{\varphi_1},B_2^{\varphi_1}}\left(P_\gamma(W+S)\right)\right\|_{L^p(\Omega_1\cap\Omega_2)}\leq \gamma \cdot \left\|W+S\right\|_{W^{1,p}(\Omega_1\cap\Omega_2)}\leq \gamma \cdot (1+C_2\cdot \delta).$$ 
Furthermore, we have
$$d_{\varphi_1}\mathcal{F}_1(P_\gamma W)= \bar{\partial}_{B_1^{\varphi_1},B_2^{\varphi_1}}\left(P_\gamma(W+S)\right)-\bar{\partial}_{B_1^{\varphi_1},B_2^{\varphi_1}}\left(P_\gamma S\right)+A(\varphi_1)\overline{\left(P_\gamma W\right)_{\zeta}}.$$
Recall that the $W^{1,p}$-norm of $P_\gamma$ was proved to be uniformly bounded in the generalized analytic case. Hence, this combined with the fact that the $L^p$-norms of $B^{\varphi_{1}}_1$ and $B^{\varphi_{1}}_2$ are uniformly bounded for all possible $\varphi_1$ gives the desired conclusion. That is, $iii)$ holds for $W$ if we choose $P_\gamma$ to be the matrix corresponding to the generalized analytic vector $W+S$. 
\end{proof}
We can now turn towards the construction of the right inverse. Note that
$$T_{(\varphi_1,\varphi_2)}\mathcal{M}=\left\{(V_1,V_2)\in W^{1,p}(\Omega_1)\times W^{1,p}(\Omega_2);\;\; d_{\varphi_2}\Psi(V_2)=V_1\;\;\textrm{on}\;\; \Omega_1\cap\Omega_2 \right\}$$
and that the linearization of $\mathcal{F}$ at $(\varphi_1,\varphi_2)\in\mathcal{M}$ is given by
$$d_{(\varphi_1,\varphi_2)}\mathcal{F}(V_1,V_2)=\left(d_{\varphi_1}\mathcal{F}_1(V_1),d_{\varphi_2}\mathcal{F}_2(V_2)\right).$$
Given an arbitrary pair $(S_1,S_2)$ of $L^p$-variations along $\mathcal{F}(\varphi_1,\varphi_2)$, we can invert each component with $V_1=Q_{\varphi_1}(S_1)$ and $V_2=Q_{\varphi_2}(S_2)$ where the operators $Q_{\varphi_1}$ and $Q_{\varphi_2}$ are the inverses of $d_{\varphi_1}\mathcal{F}_1$ and $d_{\varphi_2}\mathcal{F}_2$ provided by Propositions \ref{stiri} and \ref{pet}. However, in general, $(V_1,V_2)\notin T_{(\varphi_1,\varphi_2)}\mathcal{M}$ since  
$$d_{\varphi_2}\Psi(V_2)\neq V_1.$$ 
Still, the pair $(V_1,V_2)$ can be considered as a pair of variations along the maps $\varphi_1$ and $\varphi_2$, respectively. Since $J_2=d\Psi_*(J_1)$ on $\Omega_1\cap\Omega_2$ we have
$$\mathcal{F}_1(\varphi_1)=\mathcal{F}_1(\Psi(\varphi_2)).$$
This means that $V_1$ and $V_2$ satisfy
$$d_{\varphi_1}\mathcal{F}_1(V_1)=d_{\Psi(\varphi_2)}\mathcal{F}_1\circ d_{\varphi_2}\Psi (V_2)=d_{\varphi_1} \mathcal{F}_1 \circ d_{\varphi_2}\Psi(V_2).$$
In particular, $W=d_{\varphi_2}\Psi(V_2)-V_1$ satisfies 
$$d_{\varphi_1}\mathcal{F}_1(W)=W_{\bar{\zeta}}+A_1(\varphi_1)\overline{W_\zeta}+B_1W+B_2\overline{W}=0.$$ 
Therefore, we can apply Lemma \ref{devet}.

Let $K_1$, $K_2$, $\beta_1$ and $\beta_2$ be defined as in (\ref{kompakta}). Given $\gamma>0$ that will be fixed below, let $P_\gamma$ be the matrix corresponding to our $W$ in Lemma \ref{devet}. We can then redefine the inversions $V_1=Q_{\varphi_1}(S_1)$ and $V_2=Q_{\varphi_2}(S_2)$ as follows. First, on $\Omega_1$ we define
$$\tilde{V}_1=(V_1+\beta_1 P_\gamma W),$$
$$\widehat{V}_1=(1-\beta_2)\; \tilde{V}_1 + \beta_2 \;d_{\varphi_2}\Psi(V_2).$$ 
Further, on the intersection $\Omega_1\cap\Omega_2$ we define
$$\widehat{V}_2=d_{\varphi_2}\Psi^{-1}(\widehat{V}_1).$$
and then extend this map to the rest of $\Omega_2$ by setting $\widehat{V}_2=Q_{\varphi_2}(S_2)$ on $\Omega_2\setminus\Omega_1$. Finally, we define $\widehat{Q}(S_1,S_2)=(\widehat{V}_1,\widehat{V}_2)$.

Note that, by the construction, $\widehat{Q}(V_1,V_2)\in T_{(\varphi_1,\varphi_2)}\mathcal{M}$. However, this is only an approximation of a true right inverse for $d_{(\varphi_1,\varphi_2)}\mathcal{F}$. Indeed, firstly because $d_{\varphi_1}\mathcal{F}_1(P_\gamma W)\neq 0$ and secondly since $d_{\varphi_1}\mathcal{F}_1(\widehat{V}_1)$ and $d_{\varphi_2}\mathcal{F}_2(\widehat{V}_2)$ include also terms with derivatives of $\beta_1$ and $\beta_2$. Nevertheless, by Lemma \ref{devet}, the operator norm of the first expression is controlled by $\gamma>0$ and the $\delta$-difference between $f_1$ and $\Psi(f_2)$ on $\Omega_1\cap\Omega_2$. Furthermore, the terms including derivatives of $\beta_1$ and $\beta_2$ are controlled by the $\gamma$-distance between $V_1$ and $\tilde{V}_1$ or $\widehat{V}_1$ and $d_{\varphi_2}\Psi(V_2)$, respectively. Therefore, similarly as in the case with $J_{st}$, one can conclude that there are constants $C>0$ and $C_\gamma>0$ (depending also on the diffeomorphism $\Psi$) such that 
$$\left\|d_{(\varphi_1,\varphi_2)}\mathcal{F}\circ\widehat{Q}-Id\right\|_{op}<C\cdot\gamma+C_\gamma\cdot \delta.$$
We now set $\gamma>0$ so that $C\cdot\gamma<\frac{1}{4}$ and $\delta>0$ so that $C_\gamma\cdot\delta<\frac{1}{4}$. 
The family of uniformly bounded right inverses for $d_{(\varphi_1,\varphi_2)}\mathcal{F}$ can be defined by
$$Q_{(\varphi_1,\varphi_2)}=\widehat{Q}\circ \left(d_{(\varphi_1,\varphi_2)}\mathcal{F}\circ\widehat{Q}\right)^{-1}.$$
Indeed, the operator norm of the elements of this family depends on $P_\gamma$, where $\gamma>0$ is fixed, and the operator norms of $Q_{\varphi_1}$ and $Q_{\varphi_2}$. The later can again be uniformly bounded by Proposition \ref{pet} and part ii) of Lemma \ref{sedem}. 

\section{The gluing technique for discs on manifolds}
We conclude this paper by proving the Theorem 3. Let $\dim_{\mathbb{R}}M=2n$ and let $f\in\mathcal{O}_J(\Omega,M)$ be arbitrary. By \cite[p. 12]{ST} its graph $L_f(\zeta)=(\zeta,f(\zeta))$ admits a coordinate neighborhood 
$\Phi_{f}\colon V_{f}\subset\mathbb{R}^{2}\times M\to U_f\subset\mathbb{R}^{2n+2}$ such that $(d\Phi_{f})_*(J_{st}\otimes J)$ equals the standard structure (of the space $\mathbb{R}^{2n+2}$) along the image of $\Phi_{f}\circ L_f$ and that $\det\left[(d\Phi_{f})_*(J_{st}\otimes J)+J_{st}\right]\neq 0$ on $U_f$. We can apply this fact to $f_1\in\mathcal{O}_J(\Omega_1,M)$ and every $f_2\in\mathcal{W}_2$. In particular, we denote by $\Phi_{f_1}\colon V_{f_1}\to U_{f_1}$ the chart containing the image of $L_{f_1}$.

Let $f_2\in\mathcal{W}_2$. By (\ref{nova}) there is $\delta_{f_2}>0$ such that for every $g_2\colon\Omega_2\to M$ with $\left\|g_2-f_2\right\|_{W^{1,p}(\Omega_2,M)}<\delta_{f_2}$ we have $L_{g_2}(\Omega_2)\subset V_{f_2}$. Moreover, since $\mathcal{W}_2$ is precompact in $W^{1,p}(\Omega_2,M)$ there exist a finite set of coordinate charts $\Phi_{j}\colon V_{j}\subset\mathbb{R}^{2}\times M\to U_j\subset\mathbb{R}^{2n+2}$, $j\in\left\{1,2,\ldots,m\right\}$, such that each graph $L_{f_2}$, $f_2\in\mathcal{W}_2$, belongs to some $V_j$ and that $(d\Phi_{j})_*(J_{st}\otimes J)\in\mathcal{J}(U_j)$.

Let $\tilde{f}_1=\Phi_{f_1}\circ L_{f_1}$, let $\tilde{f}_2^j=\Phi_{j}\circ L_{f_2}$ and let
$$\mathcal{W}_2^j=\left\{\tilde{f}_2^j;\; f_2\in \mathcal{W}_2,\; L_{f_2}(\Omega_2)\subset V_{j} \;\textrm{and}\; L_{f_2}(\Omega_1\cap\Omega_2)\subset V_{f_1}\right\}.$$
Note that the images of $i\circ f_1$ and $i\circ f_2$, $f_2\in\mathcal{W}_2$, are contained in a compact subset of $\mathbb{R}^N$ and that for $\Phi\in \left\{\Phi_{f_1},\Phi_j\right\}$ and $k\in\left\{1,2\right\}$ we have 
$$\Phi\circ L_{f_k}=\left(\Phi\circ L\circ i^{-1}\right)\circ \left(i\circ f_k\right).$$
Hence the sets $\mathcal{W}_2^j$ are precompact in $W^{1,p}(\Omega_2,\mathbb{R}^{2n+2})$. Therefore we can apply Theorem $2$ for a fixed map $\tilde{f}_1\in\mathcal{O}_J(\Omega_1,\mathbb{R}^{2n+2})$, elements $\tilde{f}^j_2$ from $\mathcal{W}_2^j$ and a smooth diffeomorphism $\Psi_j=\Phi_{f_1}\circ\Phi_{j}^{-1}$ mapping from $U_{j}$ to $U_{f_1}$. In particular, we can find $\tilde{\delta}_j>0$ such every  $\tilde{f}_2\in \mathcal{W}_2^j$ satisfying
$$\left\|\tilde{f}_1-\Psi_j(\tilde{f}_2^j)\right\|_{W^{1,p}(\Omega_1\cap\Omega_2)}<\tilde{\delta}_j$$
can be glued with $\tilde{f}_1$ (t.i. we can solve this non-linear Cousin problem). Moreover, this implies that there is $\delta_j>0$ such that if $\tilde{f}_2^j\in \mathcal{W}_2^j$ and if 
$$\left\|i\circ f_1- i \circ f_2\right\|_{W^{1,p}(\Omega_1\cap\Omega_2,\mathbb{R}^N)}<\delta_j$$ 
there exists $\widehat{f}\in\mathcal{O}_J(\Omega_1\cup\Omega_2,M)$ that glues together $f_1$ and $f_2$ in the desired way. Hence, it suffices to set $\delta>0$ to be the minimum of all $\delta_j$'s.

\vskip 0.1 cm
\noindent  {\it Acknowledgments.} The research of the author was supported in part by grants P1-0291, J1-9104, J1-1690 and BI-US/19-21-108 from ARRS, Republic of Slovenia. His work on the non-linear Cousin problem was initialized in Beirut, September 2018. Therefore, he would like to thank F. Bertrand for his hospitality and fruitful discussions. He would also like to thank B. Drinovec-Drnov\v{s}ek and F. Forstneri\v{c} for their useful comments concerning the final version of the paper.

\end{document}